\newcommand{\arcsinh}{\operatorname{arcsinh}}
\newcommand{\R}{\mathbb R}
\newcommand{\N}{\mathbb N}
\newcommand{\ep}{\varepsilon}
\newcommand{\ga}{\gamma}
\renewcommand{\d}{\; \mathrm{d}}
\newcommand{\abs}[1]{\left\lvert#1\right\rvert}
\newcommand{\norm}[1]{\left\|#1\right\|}
\newcommand{\set}[1]{\left\{#1\right\}}
\newcommand{\floor}[1]{\left\lfloor #1 \right\rfloor}
\renewcommand{\dim}{m}
\newcommand{\q}{q}
\newcommand{\act}{\mathscr{L}}
\newcommand{\jac}{\mathscr{M}}
\newcommand{\nac}{\mathscr{N}}
\newtheorem{theorem}{Theorem}[section]
\newtheorem{lemma}[theorem]{Lemma}
\newtheorem{corollary}[theorem]{Corollary}
\newtheorem*{remark*}{Remark}
\newtheorem*{definition}{Definition}
\title{Effective Hamiltonian Dynamics via the Maupertuis Principle}
\author{Hartmut Schwetlick, Daniel C. Sutton and Johannes Zimmer}
\date{Dedicated to J\"urgen Scheurle, who introduced the first and third authors to dynamical systems, on the occasion of his 65th
  birthday}
\begin{document}
	
\maketitle
	
\begin{abstract}
  We consider the dynamics of a Hamiltonian particle forced by a rapidly oscillating potential in $\dim$-dimensional space. As
  alternative to the established approach of averaging Hamiltonian dynamics by reformulating the system as Hamilton-Jacobi
  equation, we propose an averaging technique via reformulation using the Maupertuis principle. We analyse the result of these two
  approaches for one space dimension.  For the initial value problem the solutions converge uniformly when the total energy is
  fixed. If the initial velocity is fixed independently of the microscopic scale, then the limit solution depends on the choice of
  subsequence. We show similar results hold for the one-dimensional boundary value problem. In the higher dimensional case we show
  a novel connection between the Hamilton-Jacobi and Maupertuis approaches, namely that the sets of minimisers and saddle points
  coincide for these functionals.
\end{abstract}
	
\section{Introduction}
	
Hamiltonian systems with rapidly oscillating potentials appear in numerous applications, and the description of their effective
(averaged) behaviour is of great interest. The seminal work of Lions, Papanicolaou and Varadhan~\cite{Lions1987a} derives this
description via a reformulation of the problem as a Hamilton-Jacobi equation, which is then homogenised. Important contributions
in this direction are also due to E~\cite{E1991a} and Evans and Gomes~\cite{Evans2001a,Evans2002a}, to name a few. In this note,
we suggest an alternative approach, via an equivalent reformulation of the original Hamiltonian problem as a least action problem,
which is a geodesic formulation in the so-called Jacobi metric. This formulation is often attributed to Maupertuis, though it has
been studied before by many scientists, including Fermat and Bernoulli~\cite{Biesiada1995a}; see also~\cite{Jourdain1912a}.

As a model problem, we study the effective description of the motion of a particle in $\dim$-dimensional space when subjected to a
highly oscillatory potential energy. This is modelled as
\begin{equation}
  \label{n2l-4}
  \q''(t) = -\frac{1}{\ep}\nabla V\left({\q(t)}/{\ep}\right) \text{ for }  \q \colon [0, T] \to \mathbb R^\dim,
\end{equation}
where $\ep > 0$ and the potential energy function $V \in C^2(\R^\dim)$ is $1$-periodic in each spatial direction. The mass of the
particle is taken to be $1$. We determine the effective behaviour of solutions to~\eqref{n2l-4} as $\ep \to 0$. In this case, it
is well known that~\eqref{n2l-4} is the Euler-Lagrange equation for the \emph{action functional}
\begin{equation}
  \label{action}
  \act_T^{\ep}[q] := \int_0^T \left[ \frac{1}{2}\norm{\q'(t)}^2 -V\left({\q(t)}/{\ep}\right) \right] \d t.
\end{equation}
Traditionally the dependence on $T$ is implicit. However, here it will be useful to denote it explicitly. Hamilton's principle of
least action states that the evolution of a mechanical system is a critical point of the action functional.
	
One way to obtain information about the limiting dynamics of~\eqref{n2l-4} as $\ep \to 0$ is to observe that~\eqref{n2l-4} gives
rise to the Hamiltonian
\begin{equation*}
  \mathscr H_{\ep}(p,q) = \frac{1}{2}\norm{p}^2 + V\left(\frac{q}{\ep}\right),
\end{equation*}
where $p$ and $q$ are the generalised momenta and position, respectively. An account of how~\eqref{n2l-4} and the Hamilton-Jacobi
PDE are related is included in~\cite[Chapter 3]{Sutton2013a}. The study of the associated Hamilton-Jacobi PDE and its effective
description is described in a more general context in the seminal work~\cite{Lions1987a}. Our comparison here is for a particular
Hamiltonian.
	
In this paper we propose an alternative framework to determine the effective dynamics of~\eqref{n2l-4} by studying this equation
in the context of the Maupertuis principle~\cite[p. 243]{Arnold1989a}. This principle states that solutions of~\eqref{n2l-4} are
reparametrised stationary points of the \emph{Jacobi functional}
\begin{equation}
  \label{length}
  \jac_E^{\ep}[\ga] := \int_0^1 2(E-V(\ga(s)/\ep)) \norm{\ga'(s)}^2 \d s \text{ for } \ga \colon [0,1] \to \mathbb R^\dim,
\end{equation}
where $E \in \mathbb R$ represents the total energy of the mechanical system. This reparametrisation from the geometric
parametrisation by $s$ to physical time $t$ is given by
\begin{equation}
  \label{eq:maupert-time}
  t_{\ep}(s) = \int_0^s \frac{\|\ga'(\sigma)\|}{\sqrt{2(E-V(\ga(\sigma)/\ep))}} \d \sigma.
\end{equation}
Observe that if $V$ is bounded and $E > \norm{V}_{\infty}$, then~\eqref{length} corresponds to the energy functional for curves in
a Riemannian manifold. A proof of the correspondence between stationary points, for sufficiently smooth potentials, can be found
in~\cite{Arnold1989a, Marsden1999a}. The proof typically involves showing that
\begin{equation}
  \label{eq:le}
  \nac_E^{\ep}[\ga] := \int_0^1 \sqrt{2(E-V(\ga(s)/\ep))} \norm{\ga'(s)} \d s
\end{equation}
and~\eqref{action} share the same critical points via their Euler-Lagrange equation. The fact that~\eqref{length} and
\eqref{eq:le} share the same minimisers when parametrised by arc-length is well known~\cite[Lemma 1.4.2, 1.4.5]{Jost2005a};
underlying is the observation that $\nac_E^{\ep}$ is a length functional, while $\act_T^{\ep}$ is an energy functional.

Both the Maupertuis averaging we propose here and the established homogenisation procedure via Hamilton-Jacobi theory are viable
homogenisation routes. A priori, it is not clear they agree. To answer this question, we study the initial value problem and
boundary value problem for~\eqref{n2l-4} in one space dimension. While the computations are relatively straightforward, it turns
out that the problem has to be formulated carefully. In principle, two natural options exist for the initial value
problem. Namely, in addition to the initial position, one can either fix the initial velocity or the total energy $E$. For the
boundary value problem, one can analogously choose the initial position and then either fix the arrival time or the total energy
$E$. As described in Section~\ref{sec:effdyn}, only the problems with fixed energy lead to a unique solution in the limit
$\ep\to0$.

In Section~\ref{sec:Effect-one-dimension}, we show the explicit averaging results are reproduced both by Hamilton-Jacobi theory
and the new approach via Maupertuis advocated here. It is worth pointing out that the Maupertuis approach automatically keeps the
energy $E$ fixed, and is thus well suited for the cases identified as relevant in Section~\ref{sec:effdyn}. Furthermore, the
Maupertuis result turns out to be trivial in one space dimension, and agrees with the Hamilton-Jacobi result, which also
reproduces the explicitly computed limit, albeit in a less trivial manner than via the Maupertuis route.

Motivated by the positive results obtained for one space dimension, it is natural to consider Maupertuis averaging in higher
dimensions. In this article, we provide only the first steps by collecting general convergence results. It is also of interest to
understand if minimisers for the Maupertuis principle are minimisers of the Hamiltonian action functional, and if the same holds
true for saddle points. While the result that their critical points agree under natural assumptions is a classic one (see,
e.g.,~\cite{Arnold1989a, Marsden1999a}), we could not find a breakdown of this result for minima and saddle points (neither of
these functionals have, in a natural setting, maxima), and establish this result in Section~\ref{sec:Relat-betw-crit}.

\section{Effective one-dimensional dynamics: Explicit computation}
\label{sec:effdyn}
	
In this section, we restrict the analysis to the one-dimensional case and consider the motion of a particle with unit mass
travelling in the rapidly oscillating potential. We model this with a potential of the form $V(\cdot/\ep)$, where $\ep$ is a
positive parameter and $V \in C^{2}(\R)$ is $1-$periodic.  Without loss of generality, we assume that $\left[ 0, 1\right]$ is the
periodicity cell and that $\max V = 0$. This case is covered by~\cite{Lions1987a}, as discussed in
Section~\ref{sec:Effect-one-dimension-2}. The motion of the particle is then governed by equation
\begin{equation}
  \label{n2l} 
  \q''_{\ep}(t) = -\frac{1}{\ep} \,  V'\left(\q_{\ep}(t)/\ep\right),\quad \; t \in \R.
\end{equation}
Consider a sequence $\{q_{\ep}\}_{\ep>0}$, where $q_{\ep} \colon \R \to \R$ solves~\eqref{n2l} and $\q_{\ep}(0)=\q_a$. Then the
total energy
\begin{equation}
  \label{1d-energy}
  E_{\ep}(t) := \frac{1}{2}\abs{\q'_{\ep}(t)}^2 + V\left( \q_{\ep}(t)/\ep \right) > 0
\end{equation}
is conserved along trajectories, and we denote this positive constant $E_{\ep}$; i.e., $E_{\ep}(t) \equiv E_{\ep} > 0$ for all
$t \in \R$. Our aim is to study the convergence of $\q_{\ep}$ as $\ep \to 0$. In order to achieve this, we solve~\eqref{1d-energy}
to obtain
\begin{equation}
  \label{eq:qprime}
  \q_{\ep}'(t) = \pm \sqrt{2(E_\ep-V(\q_{\ep}(t)/\ep))}.
\end{equation}
In this paper we will, without loss of generality, take $\q_{\ep}' > 0$; this corresponds to an initial value problem with
positive speed or a boundary value problem connecting $q_a$ to $q_b$ with $q_a < q_b$. Since $q_\ep$ is strictly monotone, there
exists an inverse function $t_\ep$ for $q_{\ep}$ satisfying $t_{\ep}(\q_{\ep}(t)) = t$ for $t\in\R$. Then from~\eqref{eq:qprime}
one obtains
\begin{equation}
  \label{eq:time-inverse}
  t_{\ep}(q) = \int_{q_a}^q \frac{1}{\sqrt{2\left(E_{\ep}-V\left({\ga}/{\ep}\right)\right)}} \d \ga . 
\end{equation}
For notational convenience, we define
\begin{equation}
  \label{def:sigma-E}
  \sigma(E_{\ep}) := \int_0^1 \frac{1}{\sqrt{2(E_{\ep}-V(\ga))}} \d \ga. 
\end{equation}

\subsection{The one-dimensional initial value problem}
\label{sec:ivp-1d}

In this section, we solve~\eqref{n2l} with $E_\ep >0$ subject to the condition that $q_{\ep}(0) = q_a$ and
$q_{\ep}'(0) = p_{\ep}$. Since $\max V = 0$, it holds that $E_\ep > V$. We consider two cases --- one where $p_{\ep}$ is kept
constant and one where $p_{\ep}$ is chosen to ensure that $E_\ep$ is constant. We first show that the sequence $t_{\ep}$ defined
by~\eqref{eq:time-inverse} approaches a line uniformly as $\ep \to 0$.

\begin{lemma}
  \label{tbd}
  The sequence $t_{\ep}$ defined in~\eqref{eq:time-inverse} satisfies
  \begin{equation}
    \label{test}
    \abs{t_{\ep}(q) - \sigma(E_{\ep}) (q-q_a)  } \leq {C_{E_{\ep}}}\ep,
  \end{equation}
  where $\sigma(E_{\ep})$ is defined in~\eqref{def:sigma-E} and
  \begin{equation}
    \label{const}
    C_{E_{\ep}}:=\frac{2}{\sqrt{2E_{\ep}}} . 
  \end{equation}
\end{lemma}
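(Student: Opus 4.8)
The plan is to exploit the periodicity of the integrand after rescaling the spatial variable to the microscopic scale. First I would substitute $\eta = \ga/\ep$ in the definition~\eqref{eq:time-inverse}, which gives
\[
  t_{\ep}(q) = \ep \int_{q_a/\ep}^{q/\ep} \frac{1}{\sqrt{2(E_{\ep} - V(\eta))}} \d \eta .
\]
Writing $f(\eta) := 1/\sqrt{2(E_{\ep} - V(\eta))}$, the hypothesis that $V$ is $1$-periodic makes $f$ a positive $1$-periodic function, and by definition~\eqref{def:sigma-E} its mean over one period is exactly $\sigma(E_{\ep}) = \int_0^1 f$. Thus~\eqref{test} is equivalent to showing that the integral of $f$ over $[q_a/\ep, q/\ep]$, an interval of length $L := (q-q_a)/\ep$, differs from $L\,\sigma(E_{\ep})$ by at most $C_{E_{\ep}}$.

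The decisive observation is that for a $1$-periodic function the integral over any interval of unit length equals $\int_0^1 f = \sigma(E_{\ep})$, independently of the endpoints; in particular $\int_a^{a+n} f = n\,\sigma(E_{\ep})$ for every integer $n$ and every $a$. I would therefore split the range into $\floor{L}$ complete unit periods followed by a remaining interval of length $\{L\} := L - \floor{L} \in [0,1)$. The complete periods contribute exactly $\floor{L}\,\sigma(E_{\ep})$, so the entire discrepancy reduces to the leftover piece, namely the difference between the integral of $f$ over an interval of length $\{L\}$ and the quantity $\{L\}\,\sigma(E_{\ep})$.

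To bound this error I would use $\max V = 0$, which gives $E_{\ep} - V(\eta) \geq E_{\ep}$ and hence the pointwise bound $0 \leq f(\eta) \leq 1/\sqrt{2E_{\ep}}$. Both the leftover integral and $\{L\}\,\sigma(E_{\ep})$ are nonnegative; the former is taken over an interval of length less than one, while for the latter $\{L\} < 1$ and $\sigma(E_{\ep}) = \int_0^1 f \leq 1/\sqrt{2E_{\ep}}$, so each of the two terms is bounded by $1/\sqrt{2E_{\ep}}$. The triangle inequality then yields an error of at most $2/\sqrt{2E_{\ep}}$, and multiplying by the prefactor $\ep$ produces precisely the right-hand side $C_{E_{\ep}}\ep$ of~\eqref{test}.

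There is no serious obstacle here; the only point requiring care is the bookkeeping of the fractional period, i.e.\ recognising that the whole-period contributions cancel exactly against $\floor{L}\,\sigma(E_{\ep})$ so that the error is controlled by a single sub-period integral. I note that pairing the two leftover terms and bounding their difference directly (rather than invoking the triangle inequality) would sharpen the constant to $1/\sqrt{2E_{\ep}}$; the stated $C_{E_{\ep}} = 2/\sqrt{2E_{\ep}}$ corresponds to the cruder but more transparent estimate.
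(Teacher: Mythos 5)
Your proof is correct and is essentially the paper's own argument: rescale to the microscopic variable, use periodicity so that the $\floor{(q-q_a)/\ep}$ complete periods contribute exactly $\floor{(q-q_a)/\ep}\,\sigma(E_\ep)$, and bound the leftover sub-period integral and the matching fraction of $\sigma(E_\ep)$ each by $1/\sqrt{2E_\ep}$ using $\max V = 0$. The only difference is cosmetic: the paper treats $q-q_a\le\ep$ as a separate case, whereas your bookkeeping with the floor (which then vanishes) absorbs it automatically, and your closing remark that a pointwise comparison sharpens the constant to $1/\sqrt{2E_\ep}$ is also valid.
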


\begin{proof}
  Let $\ep > 0$ and $q \in \R$. It follows that
  \begin{align*}
    \abs{t_{\ep}(q) - \sigma(E_{\ep}) (q-q_a) } 
    &= \abs{\int_{q_a}^{q} \left[ \frac{1}{\sqrt{2(E_{\ep}-V(\ga/\ep))}} 
      - \sigma(E_{\ep}) \right] \d \ga }\\ 
    &= \abs{\ep \int_{q_a/\ep}^{q/\ep} \left[ \frac{1}{\sqrt{2(E_{\ep}-V(\tilde \ga))}} 
      - \sigma(E_{\ep}) \right] \d \tilde \ga }.
  \end{align*}
  If $\ep$ is such that $q - q_a > \ep $, then the interval $\left[{q_a}/\ep,q/\ep\right]$ is the union of
  $\floor{\frac{q - q_a}{\ep}}$ fundamental periods of $V$ (where $\floor{\cdot}$ denotes the floor function) and a remainder set
  $Q_{\ep}$ of measure $\text{meas}(Q_{\ep}) =\frac{q - q_a}{\ep} -\floor{\frac{q - q_a}{\ep}} \leq 1$. Hence
  \begin{align}
    \label{t-est-large}
    &\abs{\ep \int_{q_a/\ep}^{q/\ep} \left[ \frac{1}{\sqrt{2(E_{\ep}-V(\tilde\ga))}} - \sigma(E_{\ep}) \right] \d \tilde\ga } 
      \nonumber \\
    &= \abs{\ep \int_{Q_{\ep}} \left[ \frac{1}{\sqrt{2(E_{\ep}-V(\tilde\ga))}} - \sigma(E_{\ep}) \right] \d \tilde\ga } \nonumber \\ 
    &\leq \ep  \int_{Q_{\ep}} \abs{ \frac{1}{\sqrt{2(E_{\ep}-V(\tilde\ga))}} - \sigma(E_{\ep}) } \d \tilde\ga  
    \leq \frac{2\ep}{\sqrt{2E_{\ep}}} . 
  \end{align}
  Otherwise $q - q_a \leq \ep $ and therefore
  \begin{multline}
    \label{t-est-small}
    \abs{\int_{q_a}^{q} \left[ \frac{1}{\sqrt{2\left(E_{\ep}-V\left(\ga/{\ep}\right)\right)}} 
      	- \sigma(E_{\ep}) \right] \d \ga }\\ 
    \leq  \int_{q_a}^{q} \left| \frac{1}{\sqrt{2\left(E_{\ep}-V\left(\ga/{\ep}\right)\right)}}
      + \sigma(E_{\ep}) \right|  \d \ga   \leq  \frac{2\ep}{\sqrt{2E_{\ep}}} . 
  \end{multline}
  Combining~\eqref{t-est-large} and~\eqref{t-est-small}, we obtain~\eqref{test}.
\end{proof}
	
We now show that the solutions of the initial value problem for equation~\eqref{n2l} converge uniformly, as $\ep \to 0$, if the
initial speed $p_{\ep}$ varies such that the associated energy is constant, $E := E_\ep$ for all $\ep$.
	
\begin{theorem}
  \label{mthm}
  Fix $E > 0$. Let $\q_\ep$ solve~\eqref{n2l} subject to $\q(0) = \q_a$, with $\q'(0) = p_{\ep} > 0$ chosen such
  that~\eqref{1d-energy} is satisfied with constant $E_\ep := E$ for all $\ep$. Then
  \begin{equation*}
    \sup_{t \in \R} \abs{ q_{\ep}(t) -  q(t) } \leq  \frac{C_E}{\sigma(E)} \ep,
  \end{equation*}
  where
  \begin{equation}\label{eqn:ivp-limit}
    q(t) = \frac{t}{\sigma(E)} + q_a \text{ for } t \in \R,
  \end{equation}
  and $C_E$ is defined in~\eqref{const}.
\end{theorem}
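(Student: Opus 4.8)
The plan is to obtain the estimate as a direct consequence of Lemma~\ref{tbd}, by transferring the bound on the \emph{inverse} functions to a bound on $q_\ep$ and $q$ themselves. The crucial observation is that the proposed limit $q(t) = t/\sigma(E) + q_a$ has an inverse that is \emph{exactly} the affine comparison function appearing in~\eqref{test}: solving~\eqref{eqn:ivp-limit} for $t$ gives the identity $t = \sigma(E)\,(q(t) - q_a)$. Thus the line against which $t_\ep$ is measured in Lemma~\ref{tbd} is precisely the time-inverse of the candidate limit, and the argument reduces to a substitution.

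Concretely, I would fix $t \in \R$ and set $x := q_\ep(t)$. Since $q_\ep$ is strictly increasing with inverse $t_\ep$, we have $t_\ep(x) = t$. Applying Lemma~\ref{tbd} at the point $x$, with $E_\ep = E$ fixed, yields
\begin{equation*}
  \abs{ t_\ep(x) - \sigma(E)(x - q_a) } \leq C_E \ep,
  \qquad \text{i.e.} \qquad
  \abs{ t - \sigma(E)(x - q_a) } \leq C_E \ep .
\end{equation*}
It then remains to rewrite the free variable $t$ via the limit identity $t = \sigma(E)\,(q(t) - q_a)$. Substituting this and factoring out $\sigma(E) > 0$ gives $\sigma(E)\,\abs{ q(t) - x } \leq C_E \ep$, that is $\abs{ q_\ep(t) - q(t) } \leq (C_E/\sigma(E))\,\ep$. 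Because the right-hand side is independent of $t$, taking the supremum over $t \in \R$ produces the claim.

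The computation is essentially obstacle-free once this identification is made; the only point I would check carefully is that $q_\ep$ is genuinely a bijection of $\R$ onto $\R$, so that every $t$ is attained as $x = q_\ep(t)$ and every such $x$ lies in the range where Lemma~\ref{tbd} applies. This follows from the uniform lower bound $q_\ep' = \sqrt{2(E - V(q_\ep/\ep))} \geq \sqrt{2E} > 0$, which is guaranteed by the normalisation $\max V = 0$ (so $V \leq 0$ and $E - V \geq E$); the same bound shows $t_\ep$ is a well-defined increasing inverse on all of $\R$. Finally, it is worth remarking that the factor $1/\sigma(E)$ in the estimate is exactly the slope of the limiting trajectory~\eqref{eqn:ivp-limit}, which is the natural Lipschitz constant converting the uniform control of arrival times in Lemma~\ref{tbd} into uniform control of positions.
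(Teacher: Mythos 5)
Your proposal is correct and is essentially the paper's own proof: both fix $t$, apply Lemma~\ref{tbd} at the point $q = q_\ep(t)$ using the inverse-function identity $t_\ep(q_\ep(t)) = t$, and divide by $\sigma(E)$ to pass from the estimate on arrival times to the estimate on positions. The extra verification that $q_\ep$ is a bijection with $q_\ep' \geq \sqrt{2E} > 0$ is a sound (and in the paper, implicit) justification of the setup preceding Lemma~\ref{tbd}.
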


\begin{proof}
  Fix $t \in \R$. Then, using the fact that $t_{\ep}$ defined in~\eqref{eq:time-inverse} satisfies $t_{\ep}(q_{\ep}(t)) = t$ and
  the estimate~\eqref{test},
  \begin{multline*}
    \abs{ q_{\ep}(t) -  q(t)} = \abs{ q_{\ep}(t) -  \frac{t}{\sigma(E)} -q_a } 
    = \abs{ q_{\ep}(t) -  \frac{t_{\ep}(q_{\ep}(t))}{\sigma(E)}-q_a } \\ 
    = \frac{1}{\sigma(E)} \abs{\sigma(E) (q_{\ep}(t)-q_a) -  t_{\ep}(q_{\ep}(t)) } 
    \leq \frac{C_E}{\sigma(E)} \ep.
  \end{multline*}
  As this estimates hold for all $t \in \R$, the uniform convergence is established.
\end{proof}

The following lemma shows that the slopes of $q_{\ep}$ tend to $0$ as $E$ approaches the critical value $0 = \max V$.
	
\begin{lemma}
  \label{rate}
  For any $E>0$, let $\q(t)$ be given as in~\eqref{eqn:ivp-limit} in Theorem~\ref{mthm}.  Then it holds that
  $\q' \equiv \frac 1 {\sigma(E)}$ decreases to $0$ as $E$ decreases to $0 = \max V$.
\end{lemma}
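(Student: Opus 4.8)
The plan is to reduce everything to the scalar quantity $\sigma(E)$ from~\eqref{def:sigma-E}, since by Theorem~\ref{mthm} we have $\q'\equiv 1/\sigma(E)$. Two things must then be shown: that $1/\sigma(E)$ is monotone, decreasing as $E$ decreases, and that $1/\sigma(E)\to 0$ as $E\downarrow 0=\max V$. The first is a monotonicity statement for $\sigma$, and the second amounts to proving $\sigma(E)\to+\infty$ as $E\downarrow 0$. I note at the outset that for each fixed $E>0$ we have $E-V(\ga)\ge E>0$, so the integrand in $\sigma(E)$ is bounded by $1/\sqrt{2E}$ and $\sigma(E)$ is a finite positive number; in particular $1/\sigma(E)>0$ for every $E>0$.

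For the monotonicity, I would observe that for each fixed $\ga$ the map $E\mapsto \big(2(E-V(\ga))\big)^{-1/2}$ is strictly decreasing on $(0,\infty)$, since the radicand increases with $E$. Integrating over $[0,1]$ preserves strict monotonicity, so $\sigma$ is strictly decreasing and hence $1/\sigma(E)$ is strictly increasing in $E$; consequently, as $E$ decreases, $\q'\equiv 1/\sigma(E)$ decreases. If a derivative-based argument is preferred, differentiation under the integral sign is justified on any compact subinterval of $(0,\infty)$ by dominated convergence, the $E$-derivative of the integrand being $-\big(2(E-V(\ga))\big)^{-3/2}$, which is uniformly bounded there; this gives $\sigma'(E)<0$ directly.

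For the limit, I would let $E\downarrow 0$ and note that the integrand increases pointwise and monotonically to $\big(-2V(\ga)\big)^{-1/2}$, so by the monotone convergence theorem $\sigma(E)\to\int_0^1\big(-2V(\ga)\big)^{-1/2}\d\ga$. It remains to show this limit integral diverges. Let $\ga_0$ be a point where $V$ attains its maximum $0$; since $V$ is $C^2$ and periodic, $\ga_0$ is a critical point, so $V'(\ga_0)=0$, and Taylor's theorem together with $V\le 0$ yields $-V(\ga)\le\tfrac12\norm{V''}_\infty(\ga-\ga_0)^2$ on a neighbourhood of $\ga_0$. Hence the integrand is bounded below there by $\big(\norm{V''}_\infty\big)^{-1/2}\abs{\ga-\ga_0}^{-1}$, which is not integrable; therefore the limit integral is $+\infty$, so $\sigma(E)\to+\infty$ and $1/\sigma(E)\to 0$.

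The only delicate step is this divergence of the limiting integral, and the whole argument turns on the behaviour of $V$ at its maximum: the $C^2$ regularity together with the vanishing first derivative at a maximiser produces the quadratic upper bound on $-V$, and hence the non-integrable $\abs{\ga-\ga_0}^{-1}$ singularity. If one wishes to avoid the monotone convergence theorem altogether, the same conclusion follows from the explicit lower bound, valid for small $\delta>0$,
\begin{equation*}
  \sigma(E)\ge\int_{\ga_0-\delta}^{\ga_0+\delta}\frac{\d\ga}{\sqrt{2E+\norm{V''}_\infty(\ga-\ga_0)^2}}=\frac{2}{\sqrt{\norm{V''}_\infty}}\arcsinh\!\left(\frac{\sqrt{\norm{V''}_\infty}\,\delta}{\sqrt{2E}}\right),
\end{equation*}
whose right-hand side tends to $+\infty$ as $E\downarrow 0$, additionally revealing that the divergence of $\sigma(E)$, and hence the decay of $\q'$, is logarithmic in $E$.
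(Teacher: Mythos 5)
Your proof is correct and rests on the same key idea as the paper's: the $C^2$ regularity and vanishing derivative of $V$ at a maximiser give the quadratic bound $-V(\ga)\le \tfrac12\|V''\|_\infty(\ga-\ga_0)^2$, which yields (in your ``alternative'' form, exactly as in the paper) the $\arcsinh$ lower bound on $\sigma(E)$ that diverges as $E\downarrow 0$. Your monotone-convergence packaging and the explicit monotonicity-in-$E$ argument are minor additions; the paper simply notes it suffices to show $\sigma(E)\to\infty$ and proves this with the same Taylor/$\arcsinh$ estimate.
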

	
\begin{proof}
  It suffices to show that $\sigma(E) \to \infty $ as $E \to 0$, where $\sigma(E)$ is defined in~\eqref{def:sigma-E}. Without loss of
  generality we assume that the maximum of $V(\ga)$ is attained at the origin $\ga=0$. Then 
  \begin{equation*}
    V(\ga) = \int_0^\ga\int_0^\tau V''(s)\d s\d\tau.
  \end{equation*}
  Now $V''$ is bounded as it is continuous and 1-periodic, so there exists a constant $C_V$ such that $V''>-C_V$ and hence
  \begin{equation*}
   V(\ga) \geq -\frac{1}{2}C_V \ga^2 \quad \text{ on } [0, 1].
  \end{equation*}
 Thus, for any $E>0$
  \begin{align*}
    \sigma(E) &= \int_0^1 \frac{1}{\sqrt{2(E-V(\ga))}} \d \ga \\
    &\geq \int_0^{1} \frac{1}{\sqrt{2E + C_V\ga^2}} \d \ga\\
    &=  \frac{1}{\sqrt{C_V}}\arcsinh\left( \sqrt{\frac{C_V}{2E}} \right).
  \end{align*}
  Taking $E\to 0$, it follows that $\sigma(E) \to \infty$, which establishes the claim.
\end{proof}

We recall that so far only the case $E>0$ was considered. For $E \leq 0$, the corresponding solutions of~\eqref{n2l} are either
separatrices or periodic trajectories with amplitudes of order $\ep$. This follows directly from the phase portrait, since the
spacing of equilibria (in direction of $q$) is $O(\ep)$. Therefore, in the limit $\ep \to 0$, solutions converge to constant
solutions (and hence zero slope). As a consequence, one can see from Lemma~\ref{rate} and Theorem~\ref{mthm} that the slope of the
limiting solution depends continuously on $E$.

We now consider the case where the initial data is fixed, $\q_{\ep}(0) = \q_a \neq 0$ and $\q'_{\ep}(0) = p_a > 0$, but the energy
$E_\ep$ is allowed to vary. We show this setting does not lead to a unique limit. Indeed, there is a family of possible limit
lines with different slopes; this can be achieved by exploiting the periodicity of $V$. So the homogenisation of the initial value
problem is, unlike the problem with fixed initial position and fixed energy, not well posed.

\begin{theorem}
  \label{theo:ivp-vel}
  Let $\q_\ep$ solve~\eqref{n2l} subject to $\q_{\ep}(0) = \q_a \neq 0$ and $\q'_{\ep}(0) = p_a > 0$. Then there exists a
  non-empty interval $I$, depending only on $\min V$ and $p_a$, such that for all $\alpha \in I$ there exists a sequence
  $\{\ep_k\}_{k\in\N}$ converging to zero such that the corresponding solutions $q_{\ep_k}$ converge uniformly to the line
  $q(t) = \alpha t + q_a.$ In particular, if $p_a < \sqrt{-2\min V}$, then $\inf I = 0$.
\end{theorem}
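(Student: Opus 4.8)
The plan is to exploit the fact that, once the initial velocity $p_a$ is held fixed while $\ep$ varies, the conserved energy ceases to be constant and instead oscillates with $\ep$. Evaluating~\eqref{1d-energy} at $t=0$ gives
\begin{equation*}
  E_\ep = \tfrac12 p_a^2 + V(q_a/\ep),
\end{equation*}
and since $q_a \neq 0$ the argument satisfies $\abs{q_a/\ep}\to\infty$ as $\ep\to0$. By the continuity and $1$-periodicity of $V$, together with $\max V = 0$, the values $V(q_a/\ep)$ sweep out the whole range $[\min V, 0]$ infinitely often as $\ep \to 0$. Hence the set of admissible energies is precisely the interval $\bigl[\tfrac12 p_a^2 + \min V,\, \tfrac12 p_a^2\bigr]$, whose endpoints depend only on $\min V$ and $p_a$.

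First I would fix a target value $E^\ast$ in the positive part of this interval, namely $E^\ast \in \bigl(\max\{0,\tfrac12 p_a^2 + \min V\},\, \tfrac12 p_a^2\bigr]$. Applying the intermediate value theorem on successive fundamental periods of $V$, I would extract a sequence $\ep_k \to 0$ along which $V(q_a/\ep_k) = E^\ast - \tfrac12 p_a^2$ \emph{exactly}, so that $E_{\ep_k} \equiv E^\ast$ is genuinely constant. Along this subsequence the hypotheses of Theorem~\ref{mthm} are satisfied---the initial position is the fixed $q_a$, the energy is the constant $E^\ast>0$, and the fixed initial velocity $p_a$ is compatible with that energy---so Theorem~\ref{mthm} delivers uniform convergence of $q_{\ep_k}$ to the line $q(t)= t/\sigma(E^\ast) + q_a$, whose slope is $\alpha = 1/\sigma(E^\ast)$.

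It then remains to identify the interval $I$ of attainable slopes. Since the integrand in~\eqref{def:sigma-E} decreases as $E$ grows, $\sigma$ is continuous and strictly decreasing, so $E^\ast \mapsto 1/\sigma(E^\ast)$ is continuous and strictly increasing; the slopes obtained as $E^\ast$ ranges over the positive admissible energies therefore form a genuine interval $I$. Non-emptiness is immediate from $\tfrac12 p_a^2 > 0 = \max V$, valid for every $p_a>0$. For the final assertion, suppose $p_a < \sqrt{-2\min V}$, equivalently $\tfrac12 p_a^2 + \min V < 0$; then the admissible energies descend to values arbitrarily close to $0$ from above. Taking $E^\ast_j \to 0^+$ and invoking Lemma~\ref{rate}, which gives $\sigma(E)\to\infty$ as $E\to0^+$, the associated slopes satisfy $1/\sigma(E^\ast_j)\to 0$, so that $\inf I = 0$.

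I expect the main obstacle to be the subsequence extraction in the second step: one must guarantee that for the prescribed constant energy $E^\ast$ there genuinely exists a sequence $\ep_k \to 0$ solving $V(q_a/\ep_k) = E^\ast - \tfrac12 p_a^2$. This hinges entirely on $q_a \neq 0$, so that the argument $q_a/\ep$ is unbounded, and on the periodicity and continuity of $V$, so that every value in $[\min V, 0]$ is attained on each period and hence for $\ep$ arbitrarily small. Once this sequence is in hand, everything downstream---the convergence itself, the interval structure of $I$, and the computation $\inf I = 0$---follows directly from Theorem~\ref{mthm}, the monotonicity of $\sigma$, and Lemma~\ref{rate}.
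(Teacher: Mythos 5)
Your proposal is correct and follows essentially the same route as the paper's proof: both fix an admissible energy $E^\ast \in \bigl(\max\{0,\tfrac12 p_a^2+\min V\},\,\tfrac12 p_a^2\bigr]$, use continuity and $1$-periodicity of $V$ (the paper via the explicit sequence $\ep_k = q_a/(V_0^{-1}+k)$ with $V(V_0^{-1}) = E^\ast - \tfrac12 p_a^2$, you via the intermediate value theorem on successive periods, which produces the same points) to force $E_{\ep_k} \equiv E^\ast$, then invoke Theorem~\ref{mthm} for the convergence and Lemma~\ref{rate} for $\inf I = 0$. Your additional observation that $\sigma$ is continuous and strictly decreasing, so that the attainable slopes genuinely form an interval, is a small detail the paper leaves implicit.
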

	
\begin{proof}
  Fix any $E >0$ such that $\min V \leq E - \frac{1}{2}p_a^2 \leq 0$.  Define $\ep_k$ for $k \in \mathbb N$ as
  \begin{equation*}
    \ep_k = \frac{q_a}{V_0^{-1} + k},
  \end{equation*}
  where $V_0^{-1}$ satisfies $V(V_0^{-1}) = E - \frac{1}{2}p_a^2$, which is possible for the given choice of $E$. This sequence
  $\{\ep_k\}_{k\in\N}$ obviously converges to 0, and the associated solutions $\q_{\ep_k}$ of the initial value problem all have
  fixed energy $E$.  Note that this utilises that a fixed $q_a \neq 0$ results in different values of the potential energy
  $V(q_a/\ep)$ by periodicity of $V$, as $\ep \to 0$. Thus the sequence $\left\{\q_{\ep_k}\right\}_{\ep_k}$ satisfies the
  assumptions of Theorem~\ref{mthm}.  Consequently $q_{\ep_k}$ converges to $q$ given in~\eqref{eqn:ivp-limit}. This can be
  repeated for any $E > 0$ provided $\min V \leq E - \frac{1}{2}p_a^2 \leq 0$. The set $I$ can thus be defined as
  \begin{equation*}
    I:= \left\{ \frac{1}{\sigma(E)} \bigm| E \in \left(\min \left\{\left(\tfrac{1}{2}p_a^2+\min V\right), 0 \right\}, 
    \tfrac{1}{2}p_a^2\right] \right\}
  \end{equation*}
  (so $E$ lies in a half-open set). If $p_a < \sqrt{-2\min V}$, then $\min \left\{\left(\tfrac{1}{2}p_a^2+\min V\right), 0
  \right\} = 0$, so we can consider the limit $E \to 0$. Since by Lemma~\ref{rate}
  \begin{equation*}
    \lim_{E \to 0} \frac{1}{\sigma(E)} = 0,
  \end{equation*}
  it follows that $\inf I = 0$ as claimed.
\end{proof}

In summary, the results of this section show that the homogenisation of the one-dimensional Hamiltonian problem with fixed initial
position and fixed total energy $E$ is meaningful, while the same problem with fixed initial position and momentum has a family of
solutions. The need to keep the energy $E$ fixed makes an averaging approach using the Maupertuis principle natural, and we will
present the result in Section~\ref{sec:Effect-one-dimension}.

\subsection{The one-dimensional boundary value problem}
\label{sec:bvp-1d}
	
In this section, we solve~\eqref{n2l} in one space dimension for $\q_{\ep}$ such that $\q_{\ep}(0) = \q_a$ and
$\q_{\ep}(T_{\ep}) = \q_b$. We consider the cases when either $T_{\ep} := T$ is fixed or $E_{\ep} := E$ is fixed for all $\ep > 0$
where, as before, $E$ and $E_{\ep}$ describe the energy defined by~\eqref{1d-energy}. Fixing $T_{\ep}$ requires that the
energy~\eqref{1d-energy} depends on $\ep$ in order to ensure that we reach the boundary value $q_\ep(T_\ep) = q_b$. Similarly,
fixing $E_{\ep}$ requires that the arrival time $T_{\ep}$ given by $T_{\ep} := t_{\ep}(q_b)$ depends on $\ep$. We first consider
the case of fixed energy.
	
\begin{theorem}
  \label{thm:bvp-E-energy}
  Fix $E > 0$ and suppose $q_{\ep}$ satisfies~\eqref{n2l} with $\q_{\ep}(0) = \q_a$, $\q_{\ep}(T_{\ep}) = \q_b$ and
  \begin{equation}
    \label{eq:bvp-E-energy}
    \frac{1}{2}\abs{\q_{\ep}'(t)}^2 + V(\q_{\ep}(t)/\ep) = E, \quad t \in \R. 
  \end{equation}
  Let $t_{\ep}$ be given by~\eqref{eq:time-inverse}, then $t_{\ep} \to \bar t$ uniformly as $\ep \to 0$, where
  \begin{equation}
    \label{eqn:bar-t}
    \bar t(q) = \sigma(E)(\q - \q_a),
  \end{equation}
  with $\sigma$ as defined in~\eqref{def:sigma-E}. Furthermore, $T_{\ep} \to \bar T$ as $\ep \to 0$, where
  \begin{equation}
    \label{eqn:t-bar}
    \bar T = \sigma(E)(\q_b - \q_a).
  \end{equation}
\end{theorem}

\begin{proof}
  Without loss of generality, we assume $q_b > q_a$. It follows from solving~\eqref{eq:bvp-E-energy} for $t=0$ that
  $p_{\ep} = \q_{\ep}'(0)$ satisfies
  \begin{equation*}
    p_{\ep} = \sqrt{2(E-V(\q_a/\ep))} 
  \end{equation*}
  (we recall $E > 0 = \max V$; the sign is taken to be positive since $q_b > q_a$). It follows from Lemma~\ref{tbd} that $t_{\ep}
  \to \bar t$ uniformly as $\ep \to 0$. In order to show that $q_{\ep}$ solves the boundary value problem, observe that
  $T_{\ep}=t_{\ep}(q_b)$ gives the unique arrival time. The fact that $T_{\ep} \to \bar T$ follows from the uniform convergence of
  $t_{\ep}$.
\end{proof}

\begin{corollary}
  \label{cor:bvp-E}
  Let $E > 0$ be fixed. The sequence $\q_{\ep}$ in Theorem~\ref{thm:bvp-E-energy} converges uniformly as $\ep \to 0$ to $q$ given
  in~\eqref{eqn:ivp-limit}. Furthermore $q(\bar T) = \q_b$.
\end{corollary}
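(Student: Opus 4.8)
The plan is to recognise that, as a trajectory, the boundary value problem with fixed energy coincides with the initial value problem with fixed energy already analysed in Theorem~\ref{mthm}; the convergence of $\q_\ep$ then comes essentially for free, and the endpoint identity $q(\bar T) = \q_b$ reduces to a one-line computation.

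First I would record that the proof of Theorem~\ref{thm:bvp-E-energy} exhibits the initial velocity as $p_\ep = \sqrt{2(E - V(\q_a/\ep))}$, which is strictly positive since $E > 0 = \max V$. Hence $\q_\ep$ solves~\eqref{n2l} with $\q_\ep(0) = \q_a$, initial speed $p_\ep > 0$, and constant energy $E_\ep := E$ for every $\ep$ --- precisely the hypotheses of Theorem~\ref{mthm}. The only formal difference is that the boundary value problem views $\q_\ep$ as defined on $[0, T_\ep]$ rather than on all of $\R$, but passing to this subinterval can only decrease a supremum.

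Next I would invoke Theorem~\ref{mthm} directly to obtain $\sup_{t \in \R} \abs{\q_\ep(t) - q(t)} \leq \frac{C_E}{\sigma(E)}\ep$, with $q$ given by~\eqref{eqn:ivp-limit}. Since the boundary value solution is this same trajectory restricted to $[0, T_\ep]$, it follows that $\sup_{t \in [0, T_\ep]} \abs{\q_\ep(t) - q(t)} \leq \frac{C_E}{\sigma(E)}\ep \to 0$, establishing the asserted uniform convergence. An equivalent route would bypass Theorem~\ref{mthm} and repeat its short argument: combining the inverse identity $t_\ep(\q_\ep(t)) = t$ with the estimate~\eqref{test} of Lemma~\ref{tbd}, already used in proving Theorem~\ref{thm:bvp-E-energy}, reproduces the same bound.

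Finally, for the endpoint condition I would substitute $\bar T = \sigma(E)(\q_b - \q_a)$ from~\eqref{eqn:t-bar} into~\eqref{eqn:ivp-limit}, giving $q(\bar T) = \bar T/\sigma(E) + \q_a = (\q_b - \q_a) + \q_a = \q_b$. I do not anticipate a genuine obstacle: the substance of the corollary is the reduction to the initial value problem, and the only point needing a moment's care is that the interval $[0, T_\ep]$ moves with $\ep$, which is harmless precisely because the bound supplied by Theorem~\ref{mthm} holds uniformly on the whole line.
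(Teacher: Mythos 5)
Your proof is correct, and its first half is essentially the paper's: both reduce the uniform convergence to Theorem~\ref{mthm} (the paper cites ``the argument establishing Theorem~\ref{mthm}''; you additionally make explicit why its hypotheses are met, namely that the boundary value solution has initial speed $p_\ep = \sqrt{2(E - V(\q_a/\ep))} > 0$ and constant energy $E$, which is a worthwhile clarification). Where you genuinely diverge is the endpoint identity $q(\bar T) = \q_b$. You obtain it by direct substitution: since $q(t) = t/\sigma(E) + \q_a$ by~\eqref{eqn:ivp-limit} and $\bar T = \sigma(E)(\q_b - \q_a)$ by~\eqref{eqn:t-bar}, the identity is a one-line computation. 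The paper instead runs a soft argument: it splits $\abs{q(\bar T) - \q_b}$ by the triangle inequality through $q_\ep(\bar T)$ and $q_\ep(T_\ep)$, uses the boundary condition $q_\ep(T_\ep) = \q_b$, pointwise convergence at $\bar T$, and equicontinuity of the family $\{\q_\ep\}_\ep$ (via the converse direction of Arzel\`a--Ascoli) together with $T_\ep \to \bar T$. Your computation is shorter and perfectly rigorous here, since the explicit value of $\bar T$ is part of the conclusion of Theorem~\ref{thm:bvp-E-energy}. What the paper's softer argument buys is robustness: it needs only $\q_\ep \to q$ uniformly, $T_\ep \to \bar T$ and the boundary condition, so it transfers verbatim to the fixed-$T$ boundary value problem treated in the corollary after Theorem~\ref{thm:bvp-T-energy}, where the limit energy $\bar E$ arises only as a subsequential limit and no closed-form expression analogous to~\eqref{eqn:t-bar} is available.
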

        
\begin{proof}
  The first part follows from the fact that $t_{\ep} \to t$ as $\ep \to 0$ uniformly and the argument establishing
  Theorem~\ref{mthm}. The second part can be shown as follows. Let $\eta > 0$, then
  \begin{align*}
    \abs{q(\bar T) - \q_b} &\leq \abs{q(\bar T) - q_{\ep}(\bar T)} + \abs{q_{\ep}(\bar T) - q_{\ep}(T_{\ep})} + \abs{q_{\ep}(T_{\ep}) - q_b}.
  \end{align*}
  By assumption, $q_{\ep}(T_{\ep}) = q_b$, so the last term vanishes. Since $\q_{\ep}$ converges pointwise to $q$, there exists
  $\ep_0$ such that $\ep < \ep_0$ implies $\abs{q(\bar T) - q_{\ep}(\bar T)} < \eta/2$. As $\q_{\ep}$ converges to $q$ uniformly,
  the family $\{\q_{\ep}\}_\ep$ is equicontinuous by the `converse' direction in the Arzel\`a-Ascoli Theorem. Hence, by standard
  arguments we may take $\ep < \ep_1$ such that $\abs{q_{\ep}(T_{\ep}) - q_{\ep}(\bar T)} < \eta/2$. Consequently, for $\ep < \min
  \{ \ep_0, \ep_1\}$, it follows that $\abs{q(\bar T) - \q_b} < \eta$. Since $\eta$ was arbitrary, the result holds.
\end{proof}	
	
We now study the case of fixed arrival times $T_{\ep} := T$.

\begin{theorem}
  \label{thm:bvp-T-energy}
  Fix $T > 0$ and let $q_{\ep}$ satisfy~\eqref{n2l} with $\q_{\ep}(0) = \q_a$, $\q_{\ep}(T) = \q_b$ and
  \begin{equation}
    \label{eq:bvp-T-energy}
    \frac{1}{2}\abs{\q_{\ep}'(t)}^2 + V(\q_{\ep}(t)/\ep) = E_{\ep} \text{ for } t \in \R,
  \end{equation}
  for some $E_{\ep} > 0$. Let $t_{\ep}$ be defined by~\eqref{eq:time-inverse}. Then there exists a subsequence (not relabelled)
  $E_{\ep}$ and $\bar E > 0$ such that $t_{\ep} \to \bar t$ uniformly, where $\bar t$ is given by~\eqref{eqn:bar-t}, and $E_{\ep}
  \to \bar E$ as $\ep \to 0$.
\end{theorem}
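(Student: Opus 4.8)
The plan is to extract from the defining relation $T=t_\ep(q_b)$ (recall $t_\ep$ from~\eqref{eq:time-inverse}) enough control on the sequence $\{E_\ep\}$ to place it in a compact subinterval of $(0,\infty)$, pass to a convergent subsequence $E_\ep\to\bar E>0$, and then invoke Lemma~\ref{tbd} together with the continuity of $\sigma$ to identify the uniform limit. Throughout I would use that $\sigma\colon(0,\infty)\to(0,\infty)$ defined in~\eqref{def:sigma-E} is a continuous, strictly decreasing bijection: it is manifestly decreasing in $E$ with $\sigma(E)\to0$ as $E\to\infty$, while Lemma~\ref{rate} gives $\sigma(E)\to\infty$ as $E\to0^+$. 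In particular $\sigma^{-1}$ is continuous on $(0,\infty)$, and the equation $\sigma(\bar E)=T/(\q_b-\q_a)$ has a unique positive solution, which will turn out to be the limiting energy.

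The core step is a two-sided bound on $\sigma(E_\ep)$. First I would repeat the period decomposition from the proof of Lemma~\ref{tbd}: for $\ep$ small enough that $\q_b-\q_a>\ep$, writing $N_\ep:=\floor{(\q_b-\q_a)/\ep}$, the interval $[\q_a/\ep,\q_b/\ep]$ splits into $N_\ep$ full periods of $V$ plus a remainder set $Q_\ep$ of measure at most $1$, so that
\[
  T=t_\ep(\q_b)=N_\ep\,\ep\,\sigma(E_\ep)+\ep\int_{Q_\ep}\frac{\d\tilde\ga}{\sqrt{2(E_\ep-V(\tilde\ga))}}.
\]
Since the integrand is nonnegative and $1$-periodic and $\text{meas}(Q_\ep)\le1$, the remainder lies in $[0,\ep\,\sigma(E_\ep)]$, whence $N_\ep\,\ep\,\sigma(E_\ep)\le T\le (N_\ep+1)\,\ep\,\sigma(E_\ep)$. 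This sandwiches $\sigma(E_\ep)$ between $T/((N_\ep+1)\ep)$ and $T/(N_\ep\ep)$, both of which converge to $T/(\q_b-\q_a)$ because $N_\ep\,\ep\to \q_b-\q_a$.

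With $\sigma(E_\ep)$ thus trapped near the finite positive value $T/(\q_b-\q_a)$ for all small $\ep$, monotonicity of $\sigma$ confines $E_\ep$ to a compact subinterval $[E^-,E^+]\subset(0,\infty)$; a Bolzano--Weierstrass extraction then yields a (nonrelabelled) subsequence with $E_\ep\to\bar E\in[E^-,E^+]$, and continuity of $\sigma^{-1}$ forces $\bar E=\sigma^{-1}(T/(\q_b-\q_a))>0$. To finish I would apply Lemma~\ref{tbd} at the energy $E_\ep$ and use the triangle inequality
\[
  \abs{t_\ep(q)-\sigma(\bar E)(q-\q_a)}\le\frac{2\ep}{\sqrt{2E_\ep}}+\abs{\sigma(E_\ep)-\sigma(\bar E)}\,(\q_b-\q_a),
\]
valid for $q\in[\q_a,\q_b]$; along the subsequence both terms tend to $0$ uniformly in $q$, giving $t_\ep\to\bar t$ uniformly on $[\q_a,\q_b]$ with $\bar t$ as in~\eqref{eqn:bar-t} at energy $\bar E$, together with $E_\ep\to\bar E$, as claimed.

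The main obstacle is the lower bound, i.e.\ ruling out $E_\ep\to0$ so that $\bar E>0$. One cannot argue this from Lemma~\ref{tbd} alone, since its error term $2\ep/\sqrt{2E_\ep}$ blows up as $E_\ep\to0$ and so could in principle absorb the divergence of $\sigma(E_\ep)(\q_b-\q_a)$; moreover the crude energy estimates coming from $\sqrt{2E_\ep}\le\q_\ep'\le\sqrt{2(E_\ep-\min V)}$ only give $E_\ep\ge\tfrac{(\q_b-\q_a)^2}{2T^2}+\min V$, which may be nonpositive. It is precisely the sharper period-by-period identity above---which bounds $\sigma(E_\ep)$ rather than merely estimating the full integral---that keeps $E_\ep$ bounded away from $0$.
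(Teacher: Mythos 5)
Your proof is correct, and it takes a genuinely different --- and in one respect sharper --- route than the paper. The paper argues by compactness and contradiction: it first constructs $E_\ep$ with $t_\ep(\q_b)=T$ via the intermediate value theorem, rules out $E_\ep\to\infty$ using $T\le(\q_b-\q_a)/\sqrt{2E_\ep}$, extracts a convergent subsequence, and excludes $\bar E=0$ by deducing from Lemma~\ref{tbd} that $\sigma(E_\ep)(\q_b-\q_a)-1<T$ for small $\ep$, contradicting $\sigma(E_\ep)\to\infty$ (Lemma~\ref{rate}). Your period-counting sandwich
\begin{equation*}
  N_\ep\,\ep\,\sigma(E_\ep)\le T\le(N_\ep+1)\,\ep\,\sigma(E_\ep),\qquad N_\ep=\floor{(\q_b-\q_a)/\ep},
\end{equation*}
replaces both contradiction steps at once: it forces $\sigma(E_\ep)\to T/(\q_b-\q_a)$, hence $E_\ep\to\bar E:=\sigma^{-1}\left(T/(\q_b-\q_a)\right)>0$, in fact along the full sequence and with the limit energy identified explicitly rather than as an abstract subsequential limit. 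Your closing observation is also well taken: the paper's exclusion of $\bar E=0$ tacitly needs the Lemma~\ref{tbd} error $2\ep/\sqrt{2E_\ep}$ to stay below $1$, which is not justified if $E_\ep$ were to vanish at rate $\ep^2$ or faster, whereas your sandwich bounds $\sigma(E_\ep)$ without ever dividing by $\sqrt{E_\ep}$ and so is immune to this; in that sense your argument repairs a genuine soft spot rather than merely rephrasing the proof. Two further small differences: your final triangle inequality, comparing $t_\ep$ with $\sigma(\bar E)(q-\q_a)$ rather than with $\sigma(E_\ep)(q-\q_a)$, makes explicit a step the paper leaves implicit, and your restriction of uniformity to $[\q_a,\q_b]$ is the correct reading, since $\abs{\sigma(E_\ep)-\sigma(\bar E)}\abs{q-\q_a}$ is unbounded in $q$ on $\R$. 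What the paper's route buys in exchange is the existence of $E_\ep$ in the first place, constructed via the intermediate value theorem, which your proof --- legitimately, given the statement --- treats as a hypothesis; on your side you should also say a word justifying continuity of $\sigma$ (dominated convergence on compact subsets of $(0,\infty)$), but that is routine.
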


\begin{proof}
  Without loss of generality, assume $\q_b > \q_a$. We seek $E_{\ep} > 0$ such that $t_{\ep}(\q_b) = T$, where $t_{\ep}$ is
  defined in~\eqref{eq:time-inverse}. We define $\tau_{\ep}$ as
  \begin{equation*}
    \tau_{\ep}(e) := \int_{\q_a}^{\q_b}  \frac{1}{\sqrt{2(e-V(s/\ep))}} \d s. 
  \end{equation*}
  Since $\max V = 0$, it follows that
  \begin{equation*}
    \int_{\q_a}^{\q_b}  \frac{1}{\sqrt{2(e-V(s/\ep))}} \d s \leq \int_{\q_a}^{\q_b}  \frac{1}{\sqrt{2e}} \d s ,
  \end{equation*}
  and thus $\tau_{\ep} \to 0$ as $e \to \infty$. Fixing $e > 0$, since $\tau_{\ep}(e) \to \sigma(e)(\q_b - \q_a)$ as $\ep \to 0$
  by Lemma~\ref{tbd}, there exists $\ep_0$ such that for $\ep < \ep_0$
  \begin{equation*}
    \sigma(e)(\q_b - \q_a) - 1 < \tau_{\ep}(e) . 
  \end{equation*}
  By Lemma~\ref{rate} it follows that $\tau_{\ep} \to \infty$ as $e \to 0$. Thus, since $\tau_{\ep}$ is continuous, it follows by
  the intermediate value theorem that there is a $\tau_{\ep}(E_{\ep}) = T$, and for this choice of $E_{\ep}$, $t_{\ep}(\q_b) = T$.

  The next step is to show that the sequence ${E_{\ep}}$ is bounded. Suppose the contrary. Then there exists a subsequence (not
  relabelled) such that $E_{\ep} \to \infty$ as $\ep \to 0$. It follows that
  \begin{align*}
    T = \int_{\q_a}^{\q_b} \frac{1}{\sqrt{2(E_{\ep}-V(\gamma/\ep))}} \d \gamma 
        \leq \frac{\q_b - \q_a}{\sqrt{2E_{\ep}}} \to 0 
  \end{align*}
  as $\ep \to 0$. This contradicts the fact that $T$ is a positive constant. As $\{E_{\ep}\}$ is bounded, we may pass to a
  convergent subsequence (not relabelled) and denote its limit as $\bar E$. We now show that $\bar E >0$. It suffices to show that
  $\bar E \neq 0$. Suppose by contradiction that $E_{\ep} \to 0$ as $\ep \to 0$. By Lemma~\ref{tbd}, there exists $\ep_0$ such
  that for $\ep < \ep_0$
  \begin{equation*}
    \sigma(E_{\ep})(\q_b - \q_a) -1 < T = t_{\ep}(q_b). 
  \end{equation*}
  Hence $T \to \infty$, since $\sigma(E_{\ep}) \to \infty$ as $E_{\ep} \to 0$ by Lemma~\ref{rate}, contradicting the finiteness of
  $T$. Thus $\bar E > 0$, and using Lemma~\ref{tbd} it follows for the subsequence $E_{\ep} \to \bar E > 0$ as $\ep \to 0$ that
  \begin{equation*}
    \abs{t_{\ep}(q) - \sigma(E_{\ep})(\q - \q_a)} \leq \frac{2\ep}{\sqrt{2E_{\ep}}} 
    < \frac{2\ep}{\sqrt{ \bar E}}
  \end{equation*}
  for $\ep < \ep_0$, where $\ep_0$ is chosen such that
  \begin{equation}
    \label{eq:bounds}
    \abs{E_{\ep} - \bar E}  < \bar E/2.
  \end{equation}
  It follows that $t_\ep \to t$ uniformly as $\ep \to 0$.
\end{proof}

\begin{corollary}
  Fix $T > 0$. Then any convergent subsequence $\q_{\ep}$ as in Theorem~\ref{thm:bvp-T-energy}, with $E_{\ep} \to \bar E$,
  converges uniformly as $\ep \to 0$ to $q$ defined in~\eqref{eqn:ivp-limit}.
\end{corollary}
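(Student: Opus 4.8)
The plan is to follow the argument of Theorem~\ref{mthm} and Corollary~\ref{cor:bvp-E} almost verbatim, the one new ingredient being that the energy now depends on $\ep$. I work along the convergent subsequence furnished by Theorem~\ref{thm:bvp-T-energy}, on which $E_{\ep} \to \bar E > 0$, so that the claimed limit is $q(t) = t/\sigma(\bar E) + \q_a$, namely~\eqref{eqn:ivp-limit} evaluated at $E = \bar E$. Since the problem is posed on $[0,T]$ and $\q_{\ep}$ is strictly increasing with $\q_{\ep}(0) = \q_a$ and $\q_{\ep}(T) = \q_b$, the value $\q_{\ep}(t)$ stays in the compact interval $[\q_a, \q_b]$ for every $t \in [0,T]$; this boundedness is what will make the final estimate uniform in $t$.

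The main computation is then as follows. Fix $t \in [0,T]$ and substitute $t = t_{\ep}(\q_{\ep}(t))$ into $\q_{\ep}(t) - q(t) = (\q_{\ep}(t) - \q_a) - t/\sigma(\bar E)$. Invoking the estimate $\abs{t_{\ep}(q) - \sigma(E_{\ep})(q - \q_a)} \le 2\ep/\sqrt{\bar E}$, valid for $\ep < \ep_0$ as established in the proof of Theorem~\ref{thm:bvp-T-energy} using~\eqref{eq:bounds}, I obtain
\begin{equation*}
  \abs{\q_{\ep}(t) - q(t)} \le \abs{\q_{\ep}(t) - \q_a}\,\abs{1 - \frac{\sigma(E_{\ep})}{\sigma(\bar E)}} + \frac{2\ep}{\sigma(\bar E)\sqrt{\bar E}} \le \abs{\q_b - \q_a}\,\abs{1 - \frac{\sigma(E_{\ep})}{\sigma(\bar E)}} + \frac{2\ep}{\sigma(\bar E)\sqrt{\bar E}}.
\end{equation*}
The second term tends to $0$ as $\ep \to 0$ and is independent of $t$, so the whole question reduces to controlling $1 - \sigma(E_{\ep})/\sigma(\bar E)$.

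The crux, and the only place this departs from the fixed-energy case, is therefore the continuity of $E \mapsto \sigma(E)$ at $\bar E$. I would prove $\sigma(E_{\ep}) \to \sigma(\bar E)$ directly from~\eqref{def:sigma-E}: for $\ep$ small enough that~\eqref{eq:bounds} holds we have $E_{\ep} > \bar E/2 > 0$, so on a neighbourhood of $\bar E$ bounded away from the singular value $\max V = 0$ the integrand $1/\sqrt{2(E - V(\ga))}$ is continuous in $E$ uniformly in $\ga \in [0,1]$; dominated convergence (or a direct mean value bound) then gives convergence of the integrals. With this, the right-hand side of the displayed inequality tends to $0$ uniformly in $t \in [0,T]$, establishing the uniform convergence $\q_{\ep} \to q$. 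The positivity $\bar E > 0$ supplied by Theorem~\ref{thm:bvp-T-energy} is essential throughout: it keeps both the constant $2\ep/(\sigma(\bar E)\sqrt{\bar E})$ finite and $\sigma$ continuous, whereas for $\bar E \to 0$ the blow-up recorded in Lemma~\ref{rate} would invalidate the estimate.
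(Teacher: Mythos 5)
Your proof is correct and takes essentially the same route as the paper: the inversion $t = t_\ep(\q_\ep(t))$ from Theorem~\ref{mthm}, combined with the bound $\abs{t_\ep(q) - \sigma(E_\ep)(q-\q_a)} < 2\ep/\sqrt{\bar E}$ that holds once~\eqref{eq:bounds} is in force, which is exactly what the paper's (very compressed) proof invokes via Corollary~\ref{cor:bvp-E} and Theorem~\ref{thm:bvp-T-energy}. In fact you are more careful than the paper: the term $\abs{\q_b-\q_a}\,\abs{1-\sigma(E_\ep)/\sigma(\bar E)}$, the continuity of $\sigma$ at $\bar E>0$, and the restriction to $t\in[0,T]$ (where $\q_\ep$ stays in $[\q_a,\q_b]$) are precisely the details the paper leaves implicit, and the last one is genuinely needed, since for $\sigma(E_\ep)\neq\sigma(\bar E)$ the error grows linearly in $t$ and uniform convergence on all of $\R$ would fail.
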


\begin{proof}
  We can argue along the lines of the proof of Corollary~\ref{cor:bvp-E}, noting that $T_{\ep} = T$ is fixed. To obtain a uniform
  bound observe that there exists $\ep_0$ such that for $\ep < \ep_0$ the inequality in~\eqref{eq:bounds} hold. Then we can apply
  the same line of reasoning as in the proof of Theorem~\ref{thm:bvp-T-energy}.
\end{proof}	

The results in this section are analogous to those for the initial value problem, in the sense that the averaging problem with
fixed total energy $E$ gives a well-defined limit. This suggests that averaging via the Maupertuis principle is promising, and
indeed we will show this in the next section. One can see that, analogously to the initial-value problem studied in
Section~\ref{sec:ivp-1d}, the second natural problem, namely now with a fixed final time $T$, can have multiple subsequences with
different limits, as in the setting of Theorem~\ref{theo:ivp-vel}.

\section{Effective one-dimensional dynamics: Homogenisation techniques}
\label{sec:Effect-one-dimension}

Theorem~\ref{mthm} gives a complete description of the solution trajectories for the initial value problem~\eqref{n2l} in the
limit $\ep \to 0$. We now show that both the proposed averaging procedure via the Maupertuis principle and the established
homogenisation technique via Hamilton-Jacobi theory recover this result. An analogous statement holds for the boundary value
problem with fixed energy, where the effective limit is given in Theorem~\ref{thm:bvp-E-energy}. In particular, the results of
this section show that the averaging approaches via Maupertuis and Hamilton agree in one space dimension.

\subsection{Effective one-dimensional dynamics via Maupertuis}
\label{sec:Effect-one-dimension-1}

The proposed averaging approach via Maupertuis recovers the result of Theorem~\ref{mthm} in a trivial manner. Indeed, the geodesic
connecting two given points is the straight line segment joining these points, hence the minimisation of the Jacobi functional
immediately yields a straight line, as~\eqref{eqn:ivp-limit} in Theorem~\ref{mthm}. All nontrivial information is contained in the
time reparametrisation. The parametrisation~\eqref{eq:maupert-time} is the limit given in~\eqref{eqn:bar-t}
with~\eqref{def:sigma-E}.

\subsection{Effective one-dimensional dynamics via Hamilton-Jacobi}
\label{sec:Effect-one-dimension-2}

We now show that the traditional averaging approach via Hamilton-Jacobi theory recovers the same effective limit as the explicit
computation (Theorem~\ref{mthm}) and Maupertuis averaging (Section~\ref{sec:Effect-one-dimension-1}). In the Hamilton-Jacobi
approach, the Hamiltonian dynamics is reformulated equivalently as Hamilton-Jacobi equation
\begin{equation}
  \label{eqn:hj-micro}
  \partial_t u_{\ep}(x,t) = \mathscr H\left(\frac{x}{\ep}, \partial_x u_{\ep}(x,t) \right),
\end{equation}
where $\partial_t$ and $\partial_x$ denote the partial derivatives with respect to $t$ and $x$. The Hamiltonian $\mathscr H$ is given by
\begin{equation*}
  \mathscr H(q,p) = \frac{1}{2}\abs{p}^2 + V\left( q \right).
\end{equation*}
A key result of~\cite{Lions1987a} is that the viscosity solutions $u_{\ep}$ of~\eqref{eqn:hj-micro} converge, as $\ep \to 0$, to a
solution $u$ of a Hamilton-Jacobi equation of the form
\begin{equation}
  \label{hjpde1}
  \partial_t u(x,t)= \bar {\mathscr H}\left(\partial_x u(x,t) \right).
\end{equation}
For the one-dimensional problem considered here, the effective Hamiltonian $\bar {\mathscr H}$ has been calculated
in~\cite{Lions1987a, E1991a, Gomes2000a,Evans2001a,Evans2002a} and is defined as
\begin{equation}
  \label{hbar}
  \bar {\mathscr H}(p) :=
  \begin{cases}
    0 & \text{ if } \abs{p} \leq \displaystyle \int_0^1 \sqrt{-2V(\ga)} \d \ga,\\
    \alpha & \text{ if }  \abs{p} =\displaystyle \int_0^1 \sqrt{2(\alpha-V(\ga))} \d \ga.
  \end{cases}
\end{equation}
	
The ``flat piece'' occurring in the in the Hamiltonian, due to the lack of differentiability in the corresponding Lagrangian, has
been described as ``trapping'' (see the discussion in~\cite[Section 7]{E1991a}). In order to solve the Hamilton-Jacobi
PDE~\eqref{hjpde1}, one uses the ansatz $u(x,t) = v(x) + w(t)$. Substituting this into~\eqref{hjpde1} it follows that
\begin{align*}
  w'(t) &= E,\\
  \bar {\mathscr H}\left(v'(x) \right) &= E,
\end{align*}
for some $E \in \mathbb R$. Reserving the free constant of integration for the spatial part $v(x)$, we can fix $w(t) =
Et$. By~\eqref{hbar} for any $E>0$ one has that either
\begin{equation*}
  v'(x) = \int_0^1 \sqrt{2(E-V(\ga))} \d \ga  \quad 
  \text{ or } \quad  v'(x) = -\int_0^1 \sqrt{2(E-V(\ga))} \d \ga.
\end{equation*}
Consequently  a solution of~\eqref{hjpde1} is
\begin{equation}
  u_{\pm}(x,t) = \pm \left( \int_0^1 \sqrt{2(E-V(\ga))} \d \ga \right) x + Et + C.
\end{equation}
for some $C \in \R$. We recall Jacobi's Theorem, paraphrased for this context, from~\cite[Section 3.2, Theorem 1]{Evans1998a},
which connects the function $u$ to trajectories of~\eqref{n2l}.

\begin{theorem} 
  Let $u \in C^2(\R)$ solve~\eqref{hjpde1}. Assume that $q$ satisfies the ODE 
  \begin{equation}
    \label{impo} 
    q' = \bar {\mathscr H}'(p), 
  \end{equation} 
  where $p = \partial_x u(q)$; the slight abuse of notation introduced by re-using $q$ and ${\mathscr H}$ is motivated by the fact
  that $q$ and $p$ are solutions to the characteristic equations for~\eqref{hjpde1}, namely
  \begin{align*}
    q' &= \bar {\mathscr H}'(p),\\
    p' &= 0.
  \end{align*}
\end{theorem}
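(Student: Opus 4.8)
The statement is the method of characteristics for the scalar first-order Hamilton--Jacobi equation~\eqref{hjpde1}, specialised to the autonomous effective Hamiltonian $\bar{\mathscr H}$; my plan is to prove it by a direct differentiation along the trajectory rather than by invoking the general theorem as a black box. Set $p(t) := \partial_x u(q(t),t)$ for the momentum evaluated along the curve. The first characteristic equation $q' = \bar{\mathscr H}'(p)$ is then exactly the ODE imposed on $q$ by hypothesis, so the whole content reduces to the conservation law $p' \equiv 0$, i.e.\ to showing that $p$ is constant in $t$.

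I would obtain this from one chain-rule computation fed by a single differentiation of the PDE. Along the curve,
\[
  p'(t) = \partial_{xx} u\big(q(t),t\big)\, q'(t) + \partial_{tx} u\big(q(t),t\big),
\]
which is legitimate since $u \in C^2$. Because $\bar{\mathscr H}$ depends on $(x,t)$ only through $\partial_x u$, differentiating~\eqref{hjpde1} in $x$ yields $\partial_{tx} u = \bar{\mathscr H}'(\partial_x u)\,\partial_{xx} u$, where I have used $u \in C^2$ once more to identify $\partial_{tx}u$ with $\partial_{xt}u$. Substituting this and $p = \partial_x u(q,t)$ into the display gives $p'(t) = \partial_{xx}u\,\big(q'(t) + \bar{\mathscr H}'(p)\big)$, so that $p' \equiv 0$ holds exactly along curves satisfying the characteristic relation between $q'$ and $\bar{\mathscr H}'(p)$. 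Structurally, the vanishing of $p'$ is forced by the autonomy of $\bar{\mathscr H}$: the PDE carries no explicit dependence on $x$, which is precisely the condition under which momentum is conserved along characteristics. Once $p' \equiv 0$ is in hand, $p$ is constant, hence $q' = \bar{\mathscr H}'(p)$ is constant and $q$ is affine in $t$, which is the mechanism by which the Hamilton--Jacobi characteristic reproduces the straight-line limit of Theorem~\ref{mthm}.

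The step I expect to require the most care -- and the genuine obstacle -- is the sign bookkeeping linking the characteristic ODE to the convention fixed in~\eqref{hjpde1}. The cancellation above is a cancellation only if the sign prescribed for $q'$ is compatible with the sign produced by differentiating the PDE; with the conventions as written one should verify this compatibility explicitly, since it is exactly this consistency, and not any deeper analytic input, that delivers $p' = 0$. I would therefore fix the orientation of time and the sign in~\eqref{hjpde1} at the outset and check that the prescribed relation $q' = \bar{\mathscr H}'(p)$ and the differentiated equation match. Modulo this check the argument is a direct specialisation of~\cite[Section 3.2, Theorem 1]{Evans1998a} to the autonomous one-dimensional setting, and no machinery beyond that cited result is required.
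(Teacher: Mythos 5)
The paper supplies no proof of this statement at all --- it is quoted as Jacobi's Theorem from~\cite[Section 3.2, Theorem 1]{Evans1998a} --- so your proposal must stand on its own, and your strategy (reduce everything to $p' \equiv 0$ via one chain rule plus one differentiation of the PDE) is indeed the standard route that the cited theorem formalises. The problem is that the ``sign bookkeeping'' you yourself single out as the genuine obstacle, and then defer as a check to be done later, in fact \emph{fails} with the conventions as written, so the proof does not close. Equation~\eqref{hjpde1} reads $\partial_t u = \bar{\mathscr H}(\partial_x u)$; differentiating in $x$ gives $\partial_{tx}u = +\bar{\mathscr H}'(\partial_x u)\,\partial_{xx}u$, so that, exactly as in your display, $p'(t) = \partial_{xx}u\,\bigl(q'(t) + \bar{\mathscr H}'(p)\bigr)$. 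Substituting the theorem's hypothesis~\eqref{impo}, $q' = +\bar{\mathscr H}'(p)$, makes the two terms \emph{add} rather than cancel: $p' = 2\,\partial_{xx}u\,\bar{\mathscr H}'(p)$, which is not zero for a general $C^2$ solution. Concretely, take $\bar{\mathscr H}(p)=p^2/2$ and $u(x,t)=x^2/\bigl(2(C-t)\bigr)$, which solves $u_t = \bar{\mathscr H}(u_x)$; the curve with $q'=\bar{\mathscr H}'(p)=q/(C-t)$ is $q = A/(C-t)$, whence $p = A/(C-t)^2$ is not constant. The cancellation $p'\equiv 0$ requires $q' = -\bar{\mathscr H}'(p)$: Evans' theorem is stated for the convention $u_t + H(u_x) = 0$, and~\eqref{hjpde1} carries the opposite sign.

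There are two honest ways to close the gap, and your write-up needs one of them. First, use the structure of the effective Hamiltonian: by~\eqref{hbar}, $\bar{\mathscr H}$ depends only on $\abs{p}$, hence is even and $\bar{\mathscr H}'$ is odd; therefore $v := -u$ solves $v_t + \bar{\mathscr H}(v_x) = 0$, your computation applied to $v$ does produce the cancellation, and the system in the statement is the true characteristic system of~\eqref{hjpde1} up to the relabelling $p \mapsto -p$. Second --- and this is all the paper actually needs downstream --- the solutions used in Section~\ref{sec:Effect-one-dimension-2} are $u_{\pm}(x,t) = \pm p(E)\,x + Et + C$, which are affine in $x$; for these $\partial_{xx}u \equiv 0$, so your identity gives $p' \equiv 0$ regardless of which sign appears in the ODE, and $q$ is the expected straight line. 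Without one of these additions your argument, read literally against~\eqref{hjpde1} and~\eqref{impo}, establishes that $p$ is generally \emph{not} conserved, i.e.\ the opposite of the intended conclusion.
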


The inverse function theorem, assuming that
\begin{equation*}
  p > \int_0^1 \sqrt{-2V(\ga)} \d \ga,
\end{equation*}
gives that
\begin{equation*}
  \bar {\mathscr H}'(p) = \frac{1}{p'\left(\bar {\mathscr H}(p)\right)} , 
\end{equation*}
where
\begin{equation}
  \label{eq:p}
  p(\alpha) := \int_0^1 \sqrt{2(\alpha-V(\ga))} \d \ga, \text{ for } \alpha > 0.
\end{equation}
Now $\bar {\mathscr H}(p) = E$ and therefore 
\begin{equation*}
  \label{impo2}
  q' = \frac{1}{p'\left(\bar {\mathscr H}(p)\right)} =\frac{1}{p'\left(E\right)}.
\end{equation*}
An integration, given the initial position $q_a$ at $t=0$, results in~\eqref{eqn:ivp-limit}. The fact that $\sigma(E) =
p'\left(E\right)$ can be seen by differentiating $p$ defined in~\eqref{eq:p} and a comparison with~\eqref{def:sigma-E}. We have
thus shown that the solutions obtained by averaging the Hamilton-Jacobi equation coincide with those obtained by the direct
homogenisation of Hamilton's ODEs.

Having established the equivalence of explicit computation, Maupertuis averaging and Hamilton-Jacobi averaging for fixed energy
$E$, we close the section on Hamilton-Jacobi theory by remarking that in the situation studied, $\bar {\mathscr H} \in C^1(\R)$.
Indeed, we have just shown that $\bar {\mathscr H}'(p) = \frac{1}{\sigma(E)} = \frac{1}{\sigma(\bar {\mathscr H}(p))} \to 0$ by
Lemma~\ref{rate} as $p \to \int_0^1 \sqrt{-2V(\ga)} \d \ga$.  Since $\bar {\mathscr H}'(p) >0$ for
$p > \int_0^1 \sqrt{-2V(\ga)} \d \ga$, it also holds that $\bar {\mathscr H}(p) \to 0$ as $p \to \int_0^1 \sqrt{-2V(\ga)} \d \ga$
by~\eqref{hbar}; therefore $\bar {\mathscr H} \in C^1(\R)$.

\section{Higher dimensional effective dynamics}
\label{sec:nd}
	
In this section, we consider the effective dynamics of
\begin{equation}
  \label{n2lnd2}
  \ddot q_{\ep} = -\frac{1}{\ep} \,  \nabla V\left(\frac{q_{\ep}}{\ep}\right),\; \ep>0,
\end{equation}
where we choose $q(0)=q_a$ with $E > 0$ fixed. The standing assumptions on $V$ are that
$V \in L^{\infty}(\R^\dim) \, \cap \, C^{2}(\R^\dim)$, that $V$ is $[0,1]^\dim$-periodic and that $\max V = 0$. One can consider
this problem using the Hamilton-Jacobi PDE as discussed in~\cite{Lions1987a}. In this section, we describe an alternative based on
the Maupertuis principle. Recall that the \emph{Jacobi length functional}, whose critical points are reparametrisation of the
solutions to~\eqref{n2lnd2}, is given by~\eqref{length},
\begin{equation}
\label{length2}
  \jac_E^{\ep}[\ga] := \int_0^1 2(E-V(\ga(s)/\ep)) \norm{\ga'(s)}^2 \d s \text{ for } \ga \colon [0,1] \to \mathbb R^\dim.
\end{equation}
 Since the Maupertuis formulation is a geometric one, it turns out that averaging results are readily available
in the case considered here, namely $E > 0 = \max V$. We collect the results below.

As before, we are interested in the convergence of solutions of~\eqref{n2lnd2} as $\ep \to 0$. A tool to study the convergence of
minimisers is $\Gamma-$convergence (see for example~\cite{Braides2002a, Dal-Maso1993a}).
\begin{definition}[$\Gamma$-convergence]
  Let $F_{\ep} \colon X \to [0,\infty]$, $F \colon X \to [0,\infty]$ be functionals defined on a metric space $(X,d)$. Then
  $F_{\ep} \stackrel{\Gamma}{\longrightarrow} F$ if for every $u \in X$ and any sequence $\{ u_{\ep}\}_{\ep >0} \subset X$ such
  that $u_{\ep} \stackrel{d}{\to} u$, one has
  \begin{equation*}
    \liminf_{\ep \to 0} F_{\ep}(u_{\ep}) \geq F(u),
  \end{equation*}
  and if in addition for every $u \in X$ there exists a sequence $\{ u_{\ep} \}_{\ep >0} \subset X$ such that
  $u_{\ep} \stackrel{d}{\to} u$ with
  \begin{equation*}
    \limsup_{\ep \to 0} F_{\ep}(u_{\ep}) \leq F(u),
  \end{equation*}
  where $\stackrel{d}{\to}$ denotes convergence in the given metric. To indicate the underlying topology, we write $\Gamma(L^2)$
  for $\Gamma$-convergence in $L^2$.
\end{definition}

The notion of $\Gamma$-convergence gives, along with suitable coercivity estimates, sufficient conditions to ensure that
minimisers converge to minimisers of the limiting functional. The next result studies the $\Gamma$-convergence of Jacobi
functionals and gives a characterisation of the limit. In the case we consider, it follows that~\eqref{length2} is the energy
functional of a Riemannian metric, whose $\Gamma-$convergence has been previously determined~\cite[Chapter 16]{Braides1998a}. We
re-state the result for the functional~\eqref{length2}.

\begin{theorem}
  \label{brd}
  Fix $E>0$. There exists a non-negative convex function $\bar \jac \colon \R^\dim \to \R$ such that
  \begin{equation*}
    \Gamma(L^2)-\lim_{\ep \to 0} \jac_E^{\ep}[\ga]
    = \int_0^1 \bar \jac( \gamma'(s)) \d s
  \end{equation*}
  for $\gamma \in H^1(0,1)$. Furthermore, the function $\bar \jac$ satisfies
  \begin{equation*}
    \bar \jac(z) := \lim_{\ep \to 0} 
    \inf \left\{ \jac_E^{\ep}[\ga] \bigm| \gamma \in H^1(0,1), \gamma(0)=0, \gamma(1)=z \right\}.
  \end{equation*}
\end{theorem}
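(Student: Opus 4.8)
The plan is to recognise that for fixed $E > 0 = \max V$, the integrand $2(E - V(\ga(s)/\ep))\norm{\ga'(s)}^2$ is precisely the energy density of a Riemannian metric with conformal factor $a_\ep(x) := 2(E - V(x/\ep))$, which is bounded and bounded away from zero by the hypotheses $V \in L^\infty$, $\max V = 0$, and $E > 0$ (so $2E \le a_\ep \le 2(E - \min V)$). This places the functional $\jac_E^\ep$ squarely in the framework of quadratic (energy) functionals of the form $\int_0^1 f(\ga(s)/\ep, \ga'(s)) \d s$ with $f(x,z) = a(x)\norm{z}^2$, whose $\Gamma$-convergence with respect to $L^2$ is the classical homogenisation result for integral functionals recorded in~\cite[Chapter 16]{Braides1998a}. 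The main work is therefore to verify that the standard structural hypotheses of that reference apply and then to specialise its conclusion to our integrand.

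First I would check the hypotheses required by the cited homogenisation theorem: the integrand $f(x,z) = a(x)\norm{z}^2$ must be Carath\'eodory (measurable in $x$, continuous in $z$ — immediate since $a$ is continuous and periodic), periodic in $x$ (inherited from the $[0,1]^\dim$-periodicity of $V$), and satisfy two-sided quadratic growth $c_0\norm{z}^2 \le f(x,z) \le c_1\norm{z}^2$ with $c_0 = 2E > 0$ and $c_1 = 2(E - \min V) < \infty$. These growth bounds are exactly what guarantees both the coercivity needed for the liminf inequality in $L^2$ and the existence of recovery sequences, and they hold here precisely because we are in the regime $E > 0 = \max V$, as the paper emphasises. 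With these in hand, the cited theorem yields the existence of a homogenised integrand $\bar\jac \colon \R^\dim \to \R$, together with the convergence statement $\Gamma(L^2)\text{-}\lim_{\ep \to 0} \jac_E^\ep = \int_0^1 \bar\jac(\ga'(s)) \d s$ on $H^1(0,1)$.

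Next I would establish the two stated properties of $\bar\jac$. The cell-problem (asymptotic) formula
\begin{equation*}
  \bar\jac(z) = \lim_{\ep \to 0} \inf\set{ \jac_E^\ep[\ga] \bigm| \ga \in H^1(0,1),\, \ga(0) = 0,\, \ga(1) = z }
\end{equation*}
is part of the conclusion of the homogenisation theorem for integrands of this quadratic type, since for homogeneous-of-degree-two integrands the homogenised density admits precisely this boundary-value characterisation (the affine boundary datum $s \mapsto sz$ is the natural competitor). Non-negativity is immediate from $\jac_E^\ep \ge 0$, hence the infimum and its limit are non-negative. Convexity of $z \mapsto \bar\jac(z)$ follows because homogenised integrands arising from such relaxation/homogenisation formulae are automatically convex (equivalently, quasiconvex, which for the scalar-in-$z$ quadratic structure reduces to convexity); alternatively one can note that $\bar\jac$ is the limit of the convex functions obtained from the cell problem on rescaled cells and convexity is preserved under pointwise limits.

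The step I expect to be the genuine obstacle is the careful matching of our functional to the exact hypotheses and normalisation of~\cite[Chapter 16]{Braides1998a}: that reference typically states results for integrands $\int f(x/\ep, Du)$ over a fixed domain, and one must confirm that the one-dimensional parametrised setting $\int_0^1 a(\ga(s)/\ep)\norm{\ga'(s)}^2 \d s$ with the oscillation entering through $\ga(s)/\ep$ (rather than through an explicit $s/\ep$) falls under the same theory. The resolution is that $s \mapsto \ga(s)$ is itself the $H^1$ map being homogenised, so the functional is of the form $\int_0^1 f(\ga, \ga') \d s$ with oscillation in the \emph{state} variable; this is still covered because $a$ depends only on $\ga$ through the periodic profile, and the two-sided bound makes the problem equivalent to homogenising a Riemannian energy, for which the cell formula above is the standard output. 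Once this identification is made explicit, the remaining assertions are direct transcriptions of the cited theorem, and no further nontrivial estimate is needed.
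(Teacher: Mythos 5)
Your proposal is correct and takes essentially the same route as the paper: the paper offers no independent proof of Theorem~\ref{brd} at all, but simply re-states the homogenisation theorem for Riemannian energy functionals from~\cite[Chapter 16]{Braides1998a}, which is exactly the result you invoke, including the identification that the oscillation enters through the state variable $\ga(s)/\ep$ (metric homogenisation, with Finsler limit) rather than through $s/\ep$. Your verification of the structural hypotheses --- periodicity of $x \mapsto 2(E-V(x))$ and the two-sided bounds $2E\norm{z}^2 \leq f(x,z) \leq 2(E-\min V)\norm{z}^2$, valid precisely because $E > 0 = \max V$ --- is the only content the paper leaves implicit.
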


Restricting the class of curves joining two specified points in $\R^\dim$ does not affect the $\Gamma$-convergence result for the
Jacobi energy or length functional; to see this, one can apply~\cite[Proposition 11.7]{Braides1998a}. It is natural to ask what
properties the effective energy density $\bar \jac$ possesses, and whether the formula for $\bar \jac$ can be evaluated for
specific potentials. We first note that the function $\bar \jac$ is in general not a quadratic form, see~\cite[Chapter
16]{Braides1998a} for a counter-example. Instead, $\bar \jac$ describes a Finsler metric, that is,
\begin{equation*}
  2E\norm{q}^2  \leq \bar \jac(q) \leq 2\left(E-\min V \right)\norm{q}^2, 
\end{equation*}
and $\bar \jac$ is convex and 2-homogeneous (see~\cite{Braides2002d}). Consequently, the $\Gamma$-con\-ver\-gence of the Jacobi
energy functional gives rise to a well-posed limit problem. By equicoercivity of the Jacobi energy functionals, one can apply the
fundamental theorem of $\Gamma$-convergence~\cite[Theorem 7.2]{Braides1998a} to find that each sequence of minimisers has a
convergent subsequence, whose limit is a geodesic of the limiting Finsler metric. Additionally, from~\cite{Braides1992b} we have
the following result for the Hamilton principle.

\begin{theorem}
  \label{brd2}
  There exists a non-negative convex function $\bar {\mathscr L} \colon \R^\dim \to \R$ such that
  \begin{equation*}
    \Gamma(L^2)-\lim_{\ep \to 0} \act_T^{\ep}[\q] 
    = \int_0^T \bar {\mathscr L}( \q'(t)) \d t
  \end{equation*}
  for $\q \in H^1(0,T)$. Furthermore, the function $\bar {\mathscr L}$ satisfies
  \begin{equation*}
    \bar {\mathscr L}(z) := \lim_{\ep \to 0} 
    \inf \left\{ \act_T^{\ep}[\q] \bigm|  \q \in H^1(0,T), \q(0)=0, \q(T)=z \right\}.
  \end{equation*}
\end{theorem}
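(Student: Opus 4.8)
The plan is to recognize Theorem~\ref{brd2} as a direct application of the $\Gamma$-convergence theory for integral functionals with oscillating Lagrangian densities, precisely as developed in~\cite{Braides1992b}. The action functional~\eqref{action} has the form $\act_T^{\ep}[\q] = \int_0^T f(\q(t)/\ep, \q'(t)) \d t$ with integrand $f(y,z) = \frac{1}{2}\norm{z}^2 - V(y)$, which is $[0,1]^\dim$-periodic in the fast variable $y$ and convex (indeed quadratic, hence uniformly convex) in the velocity variable $z$. First I would verify that $f$ satisfies the standard structural hypotheses required by the homogenisation theorem: measurability and periodicity in $y$, continuity and convexity in $z$, and two-sided growth bounds of the form $c_0\norm{z}^2 - c_1 \leq f(y,z) \leq c_2\norm{z}^2 + c_3$. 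Since $V \in L^\infty \cap C^2$ with $\max V = 0$, the bound $\min V \leq -V(y) \leq 0$ gives exactly such quadratic growth, uniformly in $y$. These are precisely the conditions under which the cited result guarantees $\Gamma(L^2)$-convergence to a homogenised functional with a convex, autonomous density.

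Second, I would invoke the homogenisation theorem to conclude existence of the limit density $\bar {\mathscr L}$ and to identify it through the asymptotic cell-problem formula. The standard result produces a density defined by a multi-cell (asymptotic) minimisation, and the formula stated in the theorem,
\begin{equation*}
  \bar {\mathscr L}(z) = \lim_{\ep \to 0}
  \inf \left\{ \act_T^{\ep}[\q] \bigm|  \q \in H^1(0,T), \q(0)=0, \q(T)=z \right\},
\end{equation*}
is the corresponding boundary-value characterisation: minimising the $\ep$-dependent functional over curves joining $0$ to $z$ in time $T$ and passing to the limit. The equivalence of this boundary-value formula with the abstract cell-problem definition is itself part of the homogenisation machinery in~\cite{Braides1992b}, so I would simply cite it rather than rederive it. Convexity and non-negativity of $\bar {\mathscr L}$ follow because $\Gamma$-limits of such functionals inherit convexity of the density from convexity in $z$, and non-negativity follows from $\bar {\mathscr L}(z) \geq c_0 \norm{z}^2 - c_1$ together with the fact that the quadratic part dominates; in fact one expects $\bar {\mathscr L}(z) \geq \frac{1}{2}\norm{z}^2$ since $-V \geq 0$.

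The main obstacle, and the only genuinely delicate point, is the verification that the boundary condition $\q(0)=0$, $\q(T)=z$ is compatible with the $\Gamma$-convergence in the $L^2$ topology. A priori, $L^2$ convergence does not preserve pointwise boundary values, so one must argue that restricting to the affine class of curves with prescribed endpoints does not alter the $\Gamma$-limit. This is exactly the role played by the boundary-value relaxation results (analogous to~\cite[Proposition 11.7]{Braides1998a}, already invoked in the Jacobi case just above), which show that imposing Dirichlet conditions at the endpoints is a continuous perturbation that commutes with the $\Gamma$-limit for coercive convex integral functionals. I would therefore frame the proof as: check the hypotheses of~\cite{Braides1992b}, apply the theorem to obtain the abstract homogenised density, and then use the boundary-value stability result to identify $\bar {\mathscr L}$ via the stated minimisation formula. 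The remaining properties (convexity, non-negativity) are routine consequences of the construction and require no separate argument beyond citing the lower-semicontinuity and convexity-preservation inherent in the $\Gamma$-convergence of convex integral functionals.
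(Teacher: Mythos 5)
Your proposal is correct and takes essentially the same route as the paper, which offers no proof of its own but presents Theorem~\ref{brd2} as a known homogenisation result quoted directly from~\cite{Braides1992b}, with the endpoint constraint handled exactly as you suggest via~\cite[Proposition 11.7]{Braides1998a}. The only blemish is a harmless sign slip in your growth-bound verification: since $\max V = 0$, the correct bound is $0 \leq -V(y) \leq -\min V$, giving $\tfrac{1}{2}\norm{z}^2 \leq f(y,z) \leq \tfrac{1}{2}\norm{z}^2 - \min V$, rather than $\min V \leq -V(y) \leq 0$.
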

As with the Jacobi functional, the result continues to hold when restricted to the boundary value problem (compare
Section~\ref{sec:bvp-1d}). In Theorems~\ref{brd} and~\ref{brd2} we see that the $\Gamma$-limit is expressed in terms of the
minimum values of boundary value problems.

\subsection{Relation between critical points of Maupertuis and Hamilton}
\label{sec:Relat-betw-crit}

In order to explore the relationship between $\bar \jac$ and $\bar {\mathscr L}$ further, we need to develop the relationship
between minimisers of $\jac_E^{\ep}$ and $\act_T^{\ep}$. As a first step, we demonstrate that suitable minimisers of the action
functional also minimise of the Jacobi functional; it will then easily follow that the same holds for saddle points. For
notational convenience, we assume that $\q_a, \q_b \in \R^\dim$ are fixed throughout this section. In addition, we define
\begin{align*}
  \Gamma &:= \set{\ga \in H^1(0,1)\bigm| \ga(0) = \q_a, \ga(1) = \q_b},\\
  G_T &:= \set{\q \in H^1(0,T) \bigm| \q(0) = \q_a, \q(T) = \q_b}.
\end{align*}
While there is only a minor difference between the spaces $\Gamma$ and $G_T$, we keep them notationally different to emphasise the
purpose of the curves in each space. Curves in $\Gamma$ are thought of as admissible geodesics on which we evaluate the Jacobi
functional, parametrised by arc-length. Curves in $G_T$ are thought of as admissible trajectories of the action functional,
parametrised by time.

\begin{theorem}
  \label{T.1}
  Fix $T > 0$ and $\ep > 0$ and let $V \in C^2(\R^\dim) \cap L^{\infty}(\R^\dim)$ with $\max V = 0$. Suppose that $\bar \q$
  minimises $\act_T^{\ep}$ in $G_T$. Let $E_T \in \mathbb R$ be such that
  \begin{equation}
    \label{energyisE}
    E_T := \frac{1}{2}\norm{\bar \q'(t)}^2 + V(\bar \q(t)/\ep), \quad t \in [0,T].
  \end{equation}
  There exists a diffeomorphism $\psi \colon [0,1] \to [0, T]$ such that $\bar \ga:= \bar \q \, \circ \, \psi$ satisfies
  \begin{equation*}
    \jac_{E_T}^{\ep}[\bar \ga] = \min \jac_{E_T}^{\ep}[\ga],
  \end{equation*}
  where the minimisation is over $\ga \in \Gamma$ such that
  \begin{equation}
    \label{timeisT}
    \int_0^1 \frac{\norm{\ga'(s)}}{\sqrt{2(E_T-V(\ga(s)/\ep))}}\; \textup{d} s = T.
  \end{equation}
  Conversely, fix $E > 0$ and $\ep > 0$. Suppose that $\bar \ga$ minimises $\jac_E^{\ep}$ in $\Gamma$. Define $T_E > 0$ as
  \begin{equation}
    \label{time-constant}
    T_E:= \int_0^1 \frac{\norm{\bar \ga'(s)}}{\sqrt{2(E-V(\bar \ga(s)/\ep))}} \d s.
  \end{equation}
  Then there exists a diffeomorphism $\psi \colon [0,1] \to [0, T_E]$ such that
  $\bar \q := \bar \ga \, \circ \, \psi^{-1}$ satisfies
  \begin{equation*}
    \act_{T_E}^{\ep}[\bar \q] = \min \act_{T_E}^{\ep}[\q],
  \end{equation*}
  where the minimisation is over $\q \in G_{T_E}$ such that 
  \begin{equation}
    \label{energyisE2}
    \frac{1}{2}\norm{\q'(t)}^2 + V(\q(t)/\ep) = E.
  \end{equation}
\end{theorem}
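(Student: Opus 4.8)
The plan is to reduce everything to two elementary relations among the three functionals and then transfer minimality by reparametrisation. First I would record, for any $\ga \in \Gamma$ and any admissible energy $E$, the Cauchy--Schwarz inequality
\[
\nac_E^\ep[\ga]^2 = \left( \int_0^1 \sqrt{2(E-V(\ga(s)/\ep))}\,\norm{\ga'(s)} \d s \right)^2 \leq \jac_E^\ep[\ga],
\]
with equality precisely when $s \mapsto \sqrt{2(E-V(\ga(s)/\ep))}\,\norm{\ga'(s)}$ is constant, i.e.\ when $\ga$ is parametrised proportionally to Jacobi arc-length. Second, I would note that $\nac_E^\ep$ is reparametrisation-invariant and that for any curve traversed at constant energy $E$, meaning $\tfrac12\norm{q'(t)}^2 + V(q(t)/\ep) = E$ on an interval $[0,\theta]$,
\[
\act_{\theta}^\ep[q] = \int_0^{\theta}\big(\norm{q'(t)}^2 - E\big)\d t = \nac_E^\ep[\ga] - E\theta,
\]
where the first equality uses $V = E - \tfrac12\norm{q'}^2$ and the second uses $\norm{q'}=\sqrt{2(E-V)}$ together with invariance of $\nac_E^\ep$ under the reparametrisation onto $[0,1]$. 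The map~\eqref{eq:maupert-time} is exactly the assignment producing, from a given $\ga$, the constant-energy-$E$ curve whose traversal time equals the left-hand side of~\eqref{timeisT}.

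For the converse (second) assertion, which I would prove first as it is cleaner, start from a minimiser $\bar\ga$ of $\jac_E^\ep$ over $\Gamma$. Comparing $\bar\ga$ with the reparametrisations of its own image forces equality in Cauchy--Schwarz, so $\jac_E^\ep[\bar\ga] = \nac_E^\ep[\bar\ga]^2$; comparing then with the arc-length reparametrisation of an arbitrary competitor shows $\bar\ga$ also minimises the length $\nac_E^\ep$ over $\Gamma$. Defining $\psi$ by~\eqref{eq:maupert-time}/\eqref{time-constant} yields a diffeomorphism $[0,1]\to[0,T_E]$ and a constant-energy-$E$ curve $\bar\q := \bar\ga \o \psi^{-1} \in G_{T_E}$ satisfying~\eqref{energyisE2}. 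For any $\q \in G_{T_E}$ of energy $E$, its arc-length reparametrisation $\ga\in\Gamma$ gives $\act_{T_E}^\ep[\q] = \nac_E^\ep[\ga] - E\,T_E \geq \nac_E^\ep[\bar\ga] - E\,T_E = \act_{T_E}^\ep[\bar\q]$, the desired minimality.

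For the first assertion, a minimiser $\bar\q$ of $\act_T^\ep$ over $G_T$ solves the Euler--Lagrange equation~\eqref{n2lnd2}, hence, the Lagrangian being autonomous, has constant energy $E_T$. I would now choose $\psi$ to be the proportional-to-Jacobi-arc-length reparametrisation of $\bar\q$, so that $\bar\ga := \bar\q \o \psi$ satisfies $\jac_{E_T}^\ep[\bar\ga] = \nac_{E_T}^\ep[\bar\ga]^2$; note $\bar\ga$ automatically obeys~\eqref{timeisT} with value $T$, since that integral merely recomputes physical time and is therefore insensitive to the choice of $\psi$. Given any competitor $\ga\in\Gamma$ satisfying~\eqref{timeisT}, its energy-$E_T$ reparametrisation is a curve $\q\in G_T$, and minimality of $\bar\q$ gives $\nac_{E_T}^\ep[\bar\ga] - E_T T = \act_T^\ep[\bar\q] \leq \act_T^\ep[\q] = \nac_{E_T}^\ep[\ga] - E_T T$, so $\nac_{E_T}^\ep[\bar\ga] \leq \nac_{E_T}^\ep[\ga]$. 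Chaining with Cauchy--Schwarz yields $\jac_{E_T}^\ep[\bar\ga] = \nac_{E_T}^\ep[\bar\ga]^2 \leq \nac_{E_T}^\ep[\ga]^2 \leq \jac_{E_T}^\ep[\ga]$, as required.

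The main obstacle is analytic rather than algebraic: one must guarantee that the reparametrisations are genuine diffeomorphisms and that the displayed identities are licit. This requires the action minimiser to be a classical ($C^2$) solution with nowhere-vanishing velocity and energy $E_T > 0 = \max V$, so that $E_T - V > 0$ and the integrand of~\eqref{eq:maupert-time} is smooth and positive; likewise the Jacobi minimiser must be a regular immersed curve. For $\q_a \neq \q_b$ the connecting curves have positive length, which forces $\norm{\bar\ga'} > 0$ after arc-length parametrisation, but the positivity of $E_T$ (ruling out turning points where $E_T = V$) is the delicate point, and is precisely where one must invoke the regularity of one-dimensional variational minimisers and the standing assumption $E>0$.
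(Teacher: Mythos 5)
Your proposal is correct and takes essentially the same route as the paper's proof: both rest on the Cauchy--Schwarz comparison of $\jac_E^{\ep}$ with $\nac_E^{\ep}$ (with equality at proportional parametrisation), the identity $\act_\theta^{\ep}[\q] + E\theta = \nac_E^{\ep}[\ga]$ for constant-energy curves, and the time-map reparametrisation~\eqref{1.4} to transfer minimality in both directions, including the same regularity caveats ($\norm{\ga'}>0$, $E_T>0$). The only differences are cosmetic: you treat the converse direction first, and you state the Cauchy--Schwarz inequality with the sharp constant $\left(\nac_E^{\ep}\right)^2 \leq \jac_E^{\ep}$, whereas the paper's~\eqref{eq:leeq} carries a factor $\tfrac{1}{2}$ that is inconsistent with its own equality case.
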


\begin{proof}
  We recall the definition of the length functional associated with $\jac_E$,
  \begin{equation*}
    \nac_E^{\ep}[\ga] :=  \int_0^1 \sqrt{2(E_T-V(\ga(s)/\ep))} \norm{\ga'(s)} \d s.
  \end{equation*}
  It follows from classical results~\cite[Lemmas 1.4.2, 1.4.5]{Jost2005a} that
  \begin{equation}
    \label{eq:leeq}
    \jac_E^{\ep}[\ga] \geq \frac{1}{2}\nac_E^{\ep}[\ga]^2,
  \end{equation}
  with equality when $\ga$ is parametrised proportional to arc-length. Minimisers of the energy functional $\jac_E^{\ep}$ are
  always parametrised by arc-length, whereas $\nac_E^{\ep}$ is invariant under reparametrisation.
		
  To prove the first part of the result, fix $T > 0$. Let $\psi \colon [0,1] \to [0, T]$ be any diffeomorphism, and set
  $\bar \ga:= \bar \q \, \circ \, \psi$. Then
  \begin{align*}
    \nac_{E_T}^{\ep}[\bar \ga] &= \int_0^1 \sqrt{2(E_T-V(\bar \ga(s)/\ep))} \norm{\bar \ga'(s)} \d s\\
                               &= \int_0^1 \sqrt{2(E_T-V(\bar \q(\psi(s)/\ep)))} \norm{\bar \q'(\psi(s))} \abs{\psi'(s)} \d s\\
                               &= \int_0^T \sqrt{2(E_T-V(\bar \q(t)/\ep))} \norm{\bar \q'(t)} \d t.
  \end{align*}
  It follows from~\eqref{energyisE} that
  \begin{align} 
    \nac_{E_T}^{\ep}[\bar \ga] &= \int_0^T \norm{\bar \q'(t)}^2 \d t \nonumber \\
                               &=  \int_0^T \left[ \frac{1}{2}\norm{\bar \q'(t)}^2 - V(\bar \q(t)/\ep) \right] \d t 
                                 +  \int_0^T \left[ \frac{1}{2}\norm{\bar \q'(t)}^2 + V(\bar \q(t)/\ep) \right] \d t \nonumber \\
                               &=  \act_T^{\ep}[\bar \q] + E_TT. \label{2.1}
  \end{align}
  Since $\bar q$ minimises $\act_T^{\ep}$ in $G_T$, we conclude that
  \begin{equation}
    \label{1.5}
    \nac_{E_T}^{\ep}[\bar \ga] \leq \act_T^{\ep}[\q] + E_TT
  \end{equation}
  for any $\q \in G_T$ such that~\eqref{energyisE} holds. Let $\ga \in \Gamma$ and suppose further that~\eqref{timeisT} holds and
  $\norm{\gamma'} > 0$; the latter can be achieved by a reparametrisation if necessary. We seek a diffeomorphism $\psi \colon
  [0,1] \to [0,T]$ such that~\eqref{energyisE} holds for $\q:= \ga \, \circ \, \psi^{-1}$. Note that~\eqref{energyisE} holds for
  $\q$ if and only if
  \begin{equation}
    \label{1.3}
    \frac{1}{2} \frac{1}{\abs{\psi'(s)}^2}\norm{ \ga'(s)}^2 + V(\ga(s)/\ep) = E_T, \quad \forall s \in [0,1].
  \end{equation}
  Taking $\psi' > 0$, without loss of generality, we solve~\eqref{1.3} to obtain
  \begin{equation}
    \label{1.4}
    \psi(s) := \int_0^s \frac{\norm{\ga'(s)}}{\sqrt{2(E_T-V(\ga(s)/\ep))}} \d s,
  \end{equation}
  which is well defined since~\eqref{timeisT} holds for $\ga$. Furthermore, since $\norm{\ga'} > 0$, it follows that $\psi$ is a
  diffeomorphism. Consequently we have
  \begin{align}
    \nac_{E_T}^{\ep}[\ga] &= \int_0^1 \sqrt{2(E_T-V(\q(\psi(s)/\ep)))} \norm{\q'(\psi(s))} \abs{\psi'(s)} \d s \nonumber\\
                          &=  \int_0^T \norm{\q'(t)}^2 \d t = \act_T^{\ep}[\q] + E_TT, \label{1.6}
  \end{align}
  where we have used~\eqref{energyisE} for the second identity. Combining~\eqref{1.5},~\eqref{1.6} and observing that $\ga$ is
  arbitrary gives that
  \begin{equation}
    \label{jac-min-1}
    \nac_{E_T}^{\ep}[\bar \ga] \leq \nac_{E_T}^{\ep}[\ga]
  \end{equation}
  for all $\ga \in \Gamma$ such that $\norm{\ga '} > 0$. Furthermore, since $\max V = 0$, it follows by~\eqref{energyisE} that
  $E_T > 0$, therefore $\nac_{E_T}^{\ep}$ is the energy functional corresponding to a Riemannian length functional. It is well
  known~\cite{Jost2005a} that minimisers are regular curves and therefore~\eqref{jac-min-1} holds for all $\ga \in \Gamma$.
  Finally, observing that~\eqref{eq:leeq} holds proves the first part of the result, since $\nac_{E_T}^{\ep}$ is invariant under
  reparametrisations; we choose the parametrisation by arc-length.
		
  For the second part of the result we use~\eqref{eq:leeq} to see that $\bar \gamma$ is a minimiser of $\nac_E^{\ep}$. Observe
  that, with $\psi$ as in~\eqref{1.4}, fixed $t \in [0, T_E]$ and $s$ such that $t = \psi(s)$,
  \begin{align}
    \frac{1}{2}\norm{\bar \q'(t)}^2 + V(\bar \q(t)/\ep) 
    = \frac{1}{2} 
      \left(\psi'(s) \right)^{-2}\norm{\bar \ga'(s)}^2 + V(\bar \ga(s)/\ep) 
     = E, \label{3.1}
  \end{align}
  hence~\eqref{energyisE2} holds. Consequently, using the change of variables $t = \psi(s)$ and~\eqref{3.1},
  \begin{multline*}
    \act_{T_E}^{\ep}[\bar \q] + ET_E = \int_0^{T_E} \norm{\q'(t)}^2 \d t 
                                     = \int_0^1  \norm{\bar \q'(\psi(s))}^2 \abs{\psi'(s)} \d s \\
                                     = \int_0^1 \sqrt{2(E-V(\bar \ga(s)/\ep))} \norm{ \ga'(s) } \d s 
                                      = \nac_E^{\ep}[\bar \ga].
  \end{multline*}
  Since $\bar \ga$ minimises $\jac_E^{\ep}$ in $\Gamma$, it holds that
  \begin{equation}\label{3.3}
    \act_{T_E}^{\ep}[\bar \q] + ET_E \leq \nac_E^{\ep}[\ga]
  \end{equation}
  for all $\ga$ such that $\ga \in \Gamma$.  Let $\q \in G_{T_E}$ satisfy~\eqref{energyisE2} and define
  $\ga := \q \, \circ \, \psi$. Then
  \begin{align}
    \nac_E^{\ep}[\ga] &= \int_0^1 \sqrt{2(E-V(\ga(s)/\ep))} \norm{\ga'(s)} \d s \nonumber \\
                      &= \int_0^1 \sqrt{2(E-V(\q(\psi(s)/\ep)))} \norm{\ga'(\psi(s))} \abs{ \psi'(s)} \d s \nonumber \\
                      &= \int_0^{T_E} \norm{\q'(t)}^2 \d t \nonumber \\
                      &=   \act_{T_E}^{\ep}[\q] + ET_E, \label{3.4}
  \end{align}
  where we have used the fact that $\q$ satisfies~\eqref{energyisE2}. Combining~\eqref{3.3} and~\eqref{3.4} gives the second part
  of the result.
\end{proof}

There are no maximisers of the functionals considered, since the functionals are unbounded from above. Thus, it is natural to
explore in light of Theorem~\ref{T.1} and Corollary~\ref{C.1} whether an analogous result holds also for saddle points. The answer
is affirmative.

\begin{corollary}
  \label{C.1}
  Fix $\ep > 0$. Let $V$ be as in Theorem~\ref{T.1}. Fix $T > 0$ and suppose that $\bar \q \in G_T$ is a saddle point of
  $\act_T^{\ep}$ such that~\eqref{energyisE} holds for some $E_T \in \mathbb R$. Taking $\psi \colon [0,1] \to [0, T]$ as
  in~\eqref{1.4}, it follows that $\bar \ga := \bar\q\, \circ \, \psi$ is a saddle point of $\jac_{E_T}^{\ep}$.  Conversely, fix
  $E > 0$ and suppose that $\bar \ga \in \Gamma$ is a saddle point of $\jac_E^{\ep}$ satisfying~\eqref{time-constant} for some
  $T_E \in \mathbb R$. Then there exists a diffeomorphism $\phi \colon [0,1] \to [0, T_E]$ such that
  $\bar \q := \bar \ga \, \circ \, \phi^{-1}$ is a saddle point of $\act_{T_E}^{\ep}$.
\end{corollary}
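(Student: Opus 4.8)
The plan is to show that the reparametrisation underlying Corollary~\ref{C.1} sets up a correspondence between curves under which the value of one functional is a strictly increasing function of the value of the other; since a strictly increasing relabelling of values preserves local minima, local maxima, and hence saddle points, the result follows once the critical-point \emph{type} is seen to be an invariant of this correspondence. The starting point is the chain of identities already obtained in the proof of Theorem~\ref{T.1}: for curves $\bar\ga$ and $\bar\q$ related by the energy-preserving reparametrisation~\eqref{1.4}, equation~\eqref{2.1} gives $\nac_{E_T}^{\ep}[\bar\ga] = \act_T^{\ep}[\bar\q] + E_T T$, while~\eqref{eq:leeq} gives $\jac_{E_T}^{\ep}[\bar\ga] = \tfrac12(\nac_{E_T}^{\ep}[\bar\ga])^2$ for the arc-length parametrisation. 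Composing these, the $\jac$-value equals $\tfrac12(\act_T^{\ep}[\bar\q] + E_T T)^2$, which is a strictly increasing function of $\act_T^{\ep}[\bar\q]$ because $E_T > 0$ forces its argument to be positive.

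For the forward direction I would first recall that a saddle point is, by definition, a critical point that is neither a local minimiser nor a local maximiser; as both functionals are unbounded above, no maximisers occur, so a saddle point is exactly a critical point that is not a local minimiser. Let $\bar\q$ be a saddle point of $\act_T^{\ep}$ with energy $E_T$ and set $\bar\ga = \bar\q \circ \psi$. By the classical Maupertuis correspondence~\cite{Arnold1989a,Marsden1999a}, $\bar\ga$ is a critical point of $\jac_{E_T}^{\ep}$. Because the reparametrisation carries a neighbourhood of $\bar\q$ onto a neighbourhood of $\bar\ga$ and transforms values by the strictly increasing map $x \mapsto \tfrac12(x + E_T T)^2$, the curve $\bar\ga$ is a local minimiser of $\jac_{E_T}^{\ep}$ if and only if $\bar\q$ is a local minimiser of $\act_T^{\ep}$; since the latter fails, $\bar\ga$ is a critical point that is not a local minimiser, i.e. a saddle point. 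This is precisely the local counterpart of the global statement of Theorem~\ref{T.1}, and the converse direction is obtained by the identical argument with the roles of the two functionals exchanged and $\psi$ replaced by $\phi^{-1}$.

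The main obstacle is that Theorem~\ref{T.1} is phrased for global constrained minimisers, whereas a saddle point is a local notion, so the correspondence must be upgraded to neighbourhoods. Concretely, the reparametrisation $\ga \mapsto \q = \ga \circ \psi^{-1}$ must be exhibited as a homeomorphism between a neighbourhood of $\bar\ga$ in the subset of $\Gamma$ cut out by the time constraint~\eqref{timeisT} and a neighbourhood of $\bar\q$ in the subset of $G_T$ cut out by the energy constraint~\eqref{energyisE}, with the value identity of the first paragraph holding at each curve of these neighbourhoods. The delicate points are that $\psi$ itself depends on the curve through~\eqref{1.4}, so one must verify that this dependence is continuous and that the two constraint sets are carried onto one another; once this local homeomorphism and the strict monotonicity of $x \mapsto \tfrac12(x + E_T T)^2$ are in hand, preservation of the minimiser/saddle dichotomy, and hence the corollary, is immediate.
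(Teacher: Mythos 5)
Your skeleton coincides with the paper's: criticality of $\bar \ga$ via the classical Maupertuis principle, exclusion of maximisers because the Jacobi functional has none, and then exclusion of minimality. The paper handles that last step in a single line and purely globally, in contrapositive: were $\bar \ga$ a minimiser of $\jac_{E_T}^{\ep}$, the converse part of Theorem~\ref{T.1} would make $\bar \q$ a minimiser of $\act_T^{\ep}$, contradicting the hypothesis. No local (neighbourhood) analysis is attempted there.

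Where you differ is in the mechanism for excluding minimality, and this is where your argument has a genuine gap. The value identity you compose, $\jac_{E_T}^{\ep}[\ga] = \tfrac12\left(\act_T^{\ep}[\q] + E_T T\right)^2$, is valid only on the constraint sets: \eqref{2.1} requires $\q$ to satisfy the pointwise energy identity~\eqref{energyisE}, and equality in~\eqref{eq:leeq} requires $\ga$ to be parametrised proportional to arc length. Hence the correspondence you build (as you concede in your final paragraph) matches only the subset of $G_T$ cut out by~\eqref{energyisE} with the subset of $\Gamma$ cut out by~\eqref{timeisT}; your second paragraph, which speaks of carrying a full neighbourhood of $\bar\q$ onto a full neighbourhood of $\bar\ga$, is inconsistent with this. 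The saddle hypothesis, however, is a statement about unconstrained neighbourhoods: $\bar \q$ fails to be a local minimiser of $\act_T^{\ep}$ in $G_T$, and the competitors witnessing this failure are arbitrary nearby $H^1$ curves, which in general satisfy no pointwise energy identity at all and are therefore invisible to your correspondence. The first link of your intended chain --- that $\bar\q$ is then also not a local minimiser of $\act_T^{\ep}$ \emph{restricted to the energy-constraint set} --- is unjustified, and it is the wrong-way implication: failing to minimise over a larger set says nothing about failing over a subset (constrained minimality is precisely the notion Theorem~\ref{T.1} delivers). So the claimed equivalence ``local minimiser of $\jac_{E_T}^{\ep}$ iff local minimiser of $\act_T^{\ep}$'' does not follow from your construction, and the proof does not close. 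If you wish to keep the local, unconstrained notion of saddle point, the natural repair is an inequality valid for \emph{unconstrained} competitors, namely $\nac_{E_T}^{\ep}[\q] \le \act_T^{\ep}[\q] + E_T T$ for every $\q \in G_T$, obtained from the pointwise estimate $\sqrt{2(E_T - V)}\,\norm{\q'} \le \tfrac12\norm{\q'}^2 + E_T - V$, with equality exactly when~\eqref{energyisE} holds; this converts arbitrary action competitors into Jacobi competitors of strictly smaller value, though one must still verify that the arc-length reparametrisation keeps them close to $\bar\ga$. Alternatively, adopt the paper's global reading of ``minimiser'' and ``saddle point'' and simply quote Theorem~\ref{T.1}, which is what the paper does.
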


\begin{proof}
  Fix $T > 0$ and consider a saddle point of $\act_T^{\ep}$. By the Maupertuis principle~\cite{Arnold1989a, Marsden1999a},
  $\bar \ga$ is a critical point of $\jac_{E_T}^{\ep}$. It is not possible that $\bar \ga$ is a minimiser of $\jac_{E_T}^{\ep}$,
  since by Theorem~\ref{T.1} it would be a minimiser $\act_T^{\ep}$. Also, it cannot be a maximiser, since $\jac_{E_T}^{\ep}$
  possesses no maximisers. To see this, note that any curve on a Riemannian manifold can be perturbed to increase length. The
  second part of the result follows by similar reasoning.
\end{proof}

It is natural to ask, given the knowledge of how minimisers between the Maupertuis and Hamilton principles are related, if there
is a similar relationship between their $\Gamma$-limits $\bar \jac$ and $\bar \act$. There seems to be no immediate
relationship. In case where $T$ is fixed, there is a functional relationship between the minimum values of $\jac_{E_T}^{\ep}$ and
$\act_T^{\ep}$. However, this minimum value of $\jac_{E_T}^{\ep}$ is attained subject to a constraint on length. In general, the
minimum length does not meet this constraint. Therefore, while a functional relationship exists at the $\ep$-scale, it will not
hold for the $\Gamma$-limit. A similar line of reasoning also holds for the converse problem.

\section*{Acknowledgements}
The authors are grateful for funding of an EPSRC network grant (EP/F03685X/1), which stimulated the research described here. DCS
and JZ thank the EPSRC for funding the project EP/K027743/1. JZ also received funding through the Leverhulme Trust (RPG-2013-261)
and a Royal Society Wolfson Research Merit Award. All authors thank the reviewers for helpful suggestions.
	

\def\cprime{$'$} \def\cprime{$'$} \def\cprime{$'$}
  \def\polhk#1{\setbox0=\hbox{#1}{\ooalign{\hidewidth
  \lower1.5ex\hbox{`}\hidewidth\crcr\unhbox0}}} \def\cprime{$'$}
  \def\cprime{$'$}

\end{document}